\newtheorem{theorem}{Theorem}[section]
\newtheorem{conjecture}[theorem]{Conjecture}
\newtheorem{lemma}[theorem]{Lemma}
\theoremstyle{definition}
\newtheorem{remark}{Remark}[section]
\def\ZZ{\mathbb{Z}}
\newcommand{\acknowledge}{\subsection*{Acknowledgments}}
\newcommand{\thismonth}{\ifcase\month\or
  January\or February\or March\or April\or May\or June\or
  July\or August\or September\or October\or November\or December\fi
  \space\number\year}
\newcommand{\rssymb}[2]{\newcommand{#1}{{\mathrmsl{#2}}}}
\newcommand{\calsymb}[2]{\newcommand{#1}{{\mathcal{#2}}}}
\newcommand{\bbsymb}[2]{\newcommand{#1}{{\mathbb{#2}}}}
\newcommand{\lieoper}[2]{\newcommand{#1}{\mathop
  {\mathfrak{#2}\null}\nolimits}}
\newcommand{\oper}[3][n]{\newcommand{#2}{\mathop
  {\mathrm{#3}\null}\ifx n#1\nolimits\else\limits\fi}}
\newcommand{\rsoper}[3][n]{\newcommand{#2}{\mathop
  {\mathrmsl{#3}\null}\ifx n#1\nolimits\else\limits\fi}}
\bbsymb\C{C} \bbsymb\F{F} \bbsymb\HQ{H}\bbsymb\N{N} \bbsymb\Q{Q}
\bbsymb\R{R} \bbsymb\U{U} \bbsymb\V{V} \bbsymb\W{W} \bbsymb\Z{Z}
\bbsymb\bbf{F} \bbsymb\bbk{K} \bbsymb\bbi{I} \bbsymb\bbl{L}
\bbsymb\bbo{O} \bbsymb\bbj{J} \bbsymb\bby{Y} \bbsymb\bbp{P}
\bbsymb\bba{A}
\calsymb\cA{A} \calsymb\cB{B} \calsymb\cC{C} 
\calsymb\cM{M} \calsymb\cN{N} \calsymb\cO{O} \calsymb\cP{P}
\calsymb\cU{U} \calsymb\cV{V} \calsymb\cW{W} \calsymb\cX{X}
\calsymb\cY{Y} \calsymb\cZ{Z}
\renewcommand{\geq}{\geqslant} \renewcommand{\leq}{\leqslant}
\oper\End{End} \oper\Hom{Hom}                    
\oper\Sym{Sym} \oper\Skew{Skew}
\oper\Aut{Aut}                                   
\oper\GL{GL} \oper\SL{SL}\oper\Symp{Sp} \oper\CO{CO} \oper\On{O}
\oper\SO{SO} \oper\Pin{Pin} \oper\Spin{Spin} \oper\CU{CU}
\oper\Un{U} \oper\SU{SU} \oper\PSU{PSU} \rsoper\Diff{Diff}
\rsoper\SDiff{SDiff}
\lieoper\der{der}                                
\lieoper\gl{gl} \lieoper\sgl{sl}\lieoper\symp{sp} \lieoper\co{co}
\lieoper\so{so} \lieoper\spin{spin} \lieoper\cu{cu} \lieoper\un{u}
\lieoper\su{su} \rsoper\Vect{Vect} \rsoper\Ham{Ham}
\def\la#1{\hbox to #1pc{\leftarrowfill}}
\def\ra#1{\hbox to #1pc{\rightarrowfill}}
\newcommand{\norm}[2][]{|\mkern-2mu|#2|\mkern-2mu|
  _{\lower1pt\hbox{${}_{#1}$}}}
\newcommand{\Norm}[2][]{\bigl|\mkern-3mu\bigr|#2\bigr|\mkern-3mu\bigr|
  _{\lower1pt\hbox{${}_{#1}$}}}
\rsoper\dimn{dim}                           
\rsoper\grad{grad}                          
\rsoper\kernel{ker}\rsoper\image{im}        
\rsoper\alt{alt}   \rsoper\sym{sym}         
\rsoper\Ad{Ad}     \rsoper\ad{ad}           
\rsoper\CoAd{CoAd} \rsoper\coad{coad}       
\rsoper\trace{tr}  \rsoper\trfree{tf}       
\rsoper\detm{det}                           
\rsoper\Vol{Vol}                            
\rsoper\divg{div}                           
\rsoper\sign{sign}                          
\rssymb\iden{id}                            
\rssymb\vol{vol}                            
\oper\Imag{Im}\oper\Real{Re}                
\newcommand{\sd}{{\raise1pt\hbox{$\scriptscriptstyle +$}}}
\newcommand{\asd}{{\raise1pt\hbox{$\scriptscriptstyle -$}}}
\newcommand{\sdasd}{{\raise1pt\hbox{$\scriptscriptstyle\pm$}}}
\newcommand{\asdsd}{{\raise1pt\hbox{$\scriptscriptstyle\mp$}}}
\rsoper\scal{scal}
\def\kahl/{k\"ahler}
\def\Kahl/{K{\"a}hler}
\begin{document}
\title[Sasaki-Einstein 7-manifolds and Orlik's conjecture]
{Sasaki-Einstein 7-manifolds and Orlik's conjecture}
\author[J. Cuadros]{Jaime Cuadros Valle}
\author[J. Lope]{Joe Lope Vicente}

\address{Departamento de Ciencias, Secci\'on Matem\'aticas,
Pontificia Universidad Cat\'olica del Per\'u,
Apartado 1761, Lima 100, Per\'u}
\email{jcuadros@pucp.edu.pe}
\email{j.lope@pucp.edu.pe}

\date{\thismonth}

\begin{abstract} 
We apply Orlik's conjecture to study the homology groups of  certain 2-connected 7-manifolds admitting quasi-regular Sasaki-Einstein metrics, among them, we found 52 new examples of Sasaki-Einstein rational homology 7-spheres, extending the list given by Boyer, Galicki and Nakamaye in \cite{BGN2}. 
As a consequence, we establish the existence of new  families of  homotopy 9-spheres admitting positive Ricci curvature and determine the diffeomorphism type of them. We also improve  previous results given by Boyer \cite{Bo}  showing  new examples of  Sasaki-Einstein 2-connected 7-manifolds homeomorphic to  connected sums of $S^3\times S^4.$ Actually we show that manifolds of the form $\#k\left(S^{3} \times S^{4}\right)$ admit Sasaki-Einstein metrics for 22 different values of $k.$ All 
 these links arise as  Thom-Sebastiani sums of chain type singularities and cycle type singularities  where Orlik's conjecture  holds due to  a rather recent result by Hertling and Mase \cite{HM}.   
 
\end{abstract} 
\maketitle
\vspace{-2mm} 


\section{Introduction}

In the last 20 years, several techniques  have been developed 
to determine the class of manifolds that admit metrics of positive Ricci curvature. For  odd dimensions, Boyer, Galicki and collaborators established the abundance  of positive Einstein metrics, actually Sasaki-Einstein metrics. In dimension five there are quite remarkable results (see \cite{Ko}, \cite{BGK}, \cite{GMS}) that leads us to think it is conceivable to give a complete  classification there. 
For dimension 7, there are several important results, in particular  7-manifolds  arising as links of hypersurface singularities were studied intensively using as  framework the seminal work of Milnor in \cite{Mi} where $S^{3}$-bundles over $S^{4}$ with structure group $SO(4)$ were studied.   
Using the Gysin sequence for these fiber bundles one obtains  three classes of 7-manifolds determined by  the Euler class $e$:
\begin{enumerate}
 \item[a)] If $e=\pm 1$ then the fiber bundle has the same homology of the 7-sphere.
 \item[b)] If $|e|\geq 2$  then the fiber bundle is a rational homology 7-sphere.   
 \item[c)] If $e=0$  then the fiber bundle has the homology of $S^{3}\times S^4.$
 \end{enumerate}
Milnor showed that manifolds  situated on the class described in item $a)$ were homeomorphic to $S^{7}.$ Furthermore, he proved that some of these bundles are not diffeomorphic to $S^{7}$ and as a result of that  exhibited the first examples of exotic spheres.  

An interesting example of manifolds described in $b)$ is the Stiefel manifold of 2-frames in $\mathbb R^5$, $V_2(\mathbb R^5)$. 
It is known that  this manifold can be realized as a link of quadric in $\mathbb C^5,$ and that   $V_2(\mathbb R^5)$ is a rational homology 7-sphere and moreover admits regular Sasaki-Einstein metric \cite{BG1, BFGK}. This example was a source of new techniques that led to establish the existence of many  Sasaki-Einstein structures.  Actually in \cite{BG1},  a method for proving the existence of Sasaki-Einstein metrics  on  links of hypersurface singularities is described.  In a sequel of papers   \cite{BGK}, \cite{BGN1},\cite{BGKT}, \cite{Bo},  Boyer, Galicki and their collaborators showed the existence of Sasaki-Einstein structures on   exotic spheres,   2-connected rational  homology 7-spheres and  connected sums of $S^{3} \times S^{4}$  respectively.  

In general, the homeomorphism type of links are not easy to be determined. However, for certain cases,  one can calculate  the integral homology of the links  through formulas derived  by Milnor and Orlik  in \cite{MO} and Orlik in \cite{Or}.  In fact,  Boyer exhibited fourteen examples \cite{Bo} of Sasaki-Einstein  7-manifolds arising from links of isolated hypersurface singularities from elements of  the well-known list of 95 codimension one K3 surfaces (\cite{IF, Ch}).  For these,  the third integral homology group is completely determined. In \cite{Go} Gomez,  calculated the torsion for the third integral homology group explicitly for ten examples of Sasaki-Einstein links of chain type singularities (also known as Orlik polynomials).
 
In this paper we benefit from  a recent result by Hertling and Mase in \cite{HM} where they show that the Orlik conjecture is valid for  chain type singularities, cycle type singularities and Thom-Sebastiani sums of them, and we update results given in \cite{BGN2}, \cite{Bo} and \cite{Go}. 
 From the list of 1936  Sasaki-Einstein 7-manifolds realized as links  from the list given in \cite{BGN2, JK} we detect 1673 that are links of hypersurface singularities of these types. Thus, via Orlik's algorithm we calculate  the third homology group for this lot. 
Among them, we found  52 new examples of 2-connected rational homology spheres admitting Sasaki-Einstein metrics. We also found  124 new examples of 2-connected  Sasaki-Einstein 7-manifolds of the form $\#2k(S^3\times S^4)$, improving a result of Boyer's in \cite{Bo}. Six of these new examples  are links of quasi-smooth Fano 3-folds coming from Reids list of 95 weighted codimension 1 K3 surfaces \cite{IF}, \cite{Ch}, the rest  of them, are links taken  from the list given in \cite{JK}. 

In recent years Sasaki-Einstein geometry has been intensely studied, part of this interest comes from a string theory conjecture known as the AdS/CFT correspondence which, under certain circumstances, relates superconformal field theories and Sasaki-Einstein metrics in dimension five and seven \cite{XY}. Also, certain rational homology spheres can be used to construct positive Sasakian structures in  homotopy 9-spheres \cite{BGN3}. 
Thereby, it is important to have as many examples as possible of Sasaki-Einstein manifolds especially in dimensions five and seven.

This paper is organized as follows: in Section 2 we give some preliminaries on the topology of links of hypersurface singularities. In Section 3, as a consequence of the explicit calculations on the topology of new rational Sasaki-Einstein homology 7-spheres given in this paper,  we  update some results on homotopy 9-spheres admitting positive Ricci curvatures given in \cite{BGN3}. Then we present Table 1 (listing new examples of rational homology 7-spheres admitting Sasaki-Einstein metrics),  Table 2  (listing new examples  of 7-links homeomorphic to connected sums of $S^3\times S^4$ admitting Sasaki-Einstein metrics produced from the Johnson and Kollár list)  and Table 3 (listing new examples  of Sasaki-Einstein 7-links homeomorphic to connected sums of $S^3\times S^4$  produced from Cheltsov's list). In these three tables we list the weights, one quasihomogenous polynomial generating the link, the type of singularity,  the degree, the Milnor number and finally the third homology group.   In Section 4,  we give a link to the four codes implemented in Matlab, these codes determine whether or  not the links come from the admissible type of singularities where Orlik's conjecture is valid, and compute the homology groups of the links under discussion. we  also give links to three additional tables (listing 7-links with non-zero third Betti number and with torsion).

\acknowledge The first author thanks Ralph Gomez and Charles Boyer for useful conversations. Part of this article was prepared with the financial support from Pontificia Universidad Católica del Perú through project DGI PI0655.


\section{Preliminaries: Sasaki-Einstein metrics on links and Orlik's conjecture}

In this section we briefly review the Sasakian geometry of links of isolated hypersurface singularities defined by weighted homogeneous polynomials. We describe the explicit constructions of Sasaki-Einstein manifolds
given by Boyer and Galicki \cite{BBG}. Then we give some known facts on the topology of links of hypersurface singularites \cite{Mi}, \cite{MO} and state Orlik's conjecture.  We also set up a table with the necessary conditions to obtain links where this conjecture is known to be valid.

\subsection{\textit{Links and Sasaki-Einstein metrics}} 
Consider the weighted $\mathbb{C}^{*}$ action on $\mathbb{C}^{n+1}$ given by 
$$
\left(z_{0}, \ldots, z_{n}\right) \longmapsto\left(\lambda^{w_{0}} z_{0}, \ldots, \lambda^{w_{n}} z_{n}\right)
$$
where $w_{i}$ are the weights which are positive integers and $\lambda \in \mathbb{C}^{*}$. Let us denote the weight vector by  $\mathbf{w}=\left(w_{0}, \ldots, w_{n}\right).$ We assume
$
\operatorname{gcd}\left(w_{0}, \ldots, w_{n}\right)=1.
$
Recall that a polynomial $f \in \mathbb{C}\left[z_{0}, \ldots, z_{n}\right]$ is said to be a weighted homogeneous polynomial of degree $d$ and weight $\mathbf{w}=\left(w_{0}, \ldots, w_{n}\right)$ if for any $\lambda \in \mathbb{C}^{*}=\mathbb{C} \backslash\{0\}$, we have
$$
f\left(\lambda^{w_{0}} z_{0}, \ldots, \lambda^{w_{n}} z_{n}\right)=\lambda^{d} f\left(z_{0}, \ldots, z_{n}\right) .
$$

We are interested in those weighted homogeneous polynomials $f$ whose zero locus in $\mathbb{C}^{n+1}$ has only an isolated singularity at the origin. 
The link $L_{f}(\mathbf{w}, d)$  (sometimes written $L_f$) is defined by  $$L_{f}(\mathbf{w}, d)=f^{-1}(0) \cap S^{2 n+1},$$ where $S^{2 n+1}$ is the $(2 n+1)$-sphere in $\mathbb{C}^{n+1}$. By the Milnor fibration theorem \cite{Mi}, $L_{f}(\mathbf{w}, d)$ is a closed $(n-2)$-connected $(2n-1)$-manifold that bounds a parallelizable manifold with the homotopy type of a bouquet of $n$-spheres. Furthermore, $L_{f}(\mathbf{w}, d)$ admits a quasi-regular Sasaki structure in a natural way, see for instance  \cite{Ta}.  Moreover, if one considers the locally free $S^1$-action induced by the weighted $\mathbb{C}^{*}$ action on $f^{-1}(0)$ the quotient space of the link  $L_{f}(\mathbf{w}, d)$ by this action is the weighted hypersurface $\mathcal{Z}_{f},$ a K\"ahler orbifold. Actually, we have the following commutative diagram \cite{BBG}

\begin{equation*}
\begin{CD} 
 L_{f}(\mathbf{w}, d) @> {\qquad\qquad}>>  S^{2n+1}_{\bf w}\\
@VV{\pi}V  @VVV\\
{\mathcal Z}_{f}  @> {\qquad\qquad}>>  {\mathbb P}({\bf w}),
\end{CD}
\end{equation*}
where $S_{\mathbf{w}}^{2 n+1}$ denotes the unit sphere with a weighted Sasakian structure, $\mathbb{P}(\mathbf{w})$ is a weighted projective space coming from the quotient of $S_{\mathbf{w}}^{2 n+1}$ by a weighted circle action generated from the weighted Sasakian structure. The top horizontal arrow is a Sasakian embedding and the bottom arrow is Kählerian embedding and  the vertical arrows are orbifold Riemannian submersions. 

It follows from the orbifold adjuntion formula that the  link $L_f$ admits a positive Ricci curvature if the quotient orbifold $Z_{f}$ by the natural action $S^1$ is Fano, which is equivalent to 
\begin{equation}
|{\bf w}|-d_f >0,  
\end{equation}
Here $|\mathbf{w}|=\sum_{i=0}^m w_i$ denotes the norm of the weight vector $\mathbf{w}$ and $d_f$ is the degree of the polynomial $f$. Furthermore,  in \cite{BG1}, Boyer and Galicki found a method to obtain 2-connected Sasaki-Einstein 7-manifolds from the existence of  orbifold Fano Kähler-Einstein hypersurfaces $\mathcal{Z}_{f}$ in weighted projective 4-space $\mathbb{P}(\mathbf{w})$. Actually, they showed a  more general result:

\begin{theorem}
The link $L_{f}(\mathbf{w}, d)$  admits a Sasaki-Einstein structure if and only if the Fano orbifold 
$\mathcal{Z}_{f}$ admits a Kähler-Einstein orbifold metric of scalar curvature $4 n(n+1).$
\end{theorem}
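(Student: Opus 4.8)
The plan is to prove the equivalence through the transverse Kähler geometry of the quasi-regular Sasakian structure, using the orbifold Riemannian submersion $\pi \from L_f \to \mathcal{Z}_f$ from the diagram above. Throughout, let $n = \dim_{\mathbb{C}}\mathcal{Z}_f$, so that $\dim L_f = 2n+1$ and a Sasaki--Einstein metric on $L_f$ has Einstein constant $2n$. Write the Sasakian structure as $(\xi,\eta,\Phi,g)$, with $\xi$ the Reeb field generating the locally free $S^1$-action, and decompose $g = g^T + \eta\otimes\eta$, where $g^T$ is the transverse metric on the contact distribution $\ker\eta$. Since the structure is quasi-regular and the action locally free, $g^T$ is the horizontal lift of a Kähler orbifold metric $h$ on the leaf space $\mathcal{Z}_f$; conversely, once the transverse Kähler class $[d\eta]$ is fixed, every Kähler orbifold metric on $\mathcal{Z}_f$ in that class arises this way. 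The entire argument is local in orbifold charts, where $\pi$ is an ordinary Riemannian submersion with one-dimensional totally geodesic fibres, so the classical O'Neill identities apply verbatim.

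The key computational input is the transverse Ricci identity. In any orbifold chart, for horizontal lifts $\tilde X,\tilde Y$ of vector fields on $\mathcal{Z}_f$, the O'Neill formulas together with the Sasakian relation $\nabla_X\xi = -\Phi X$ give
\[
\operatorname{Ric}_g(\tilde X,\tilde Y) = \operatorname{Ric}^T(X,Y) - 2\,g^T(\tilde X,\tilde Y), \qquad \operatorname{Ric}_g(\xi,\xi) = 2n, \qquad \operatorname{Ric}_g(\xi,\tilde X) = 0,
\]
where $\operatorname{Ric}^T$ is the Ricci curvature of $h$. Granting this, the forward direction is immediate: if $\operatorname{Ric}_g = \lambda g$, evaluating on $(\xi,\xi)$ forces $\lambda = 2n$, and evaluating on horizontal pairs yields $\operatorname{Ric}^T = (2n+2)\,g^T$, so $h$ is Kähler--Einstein with Einstein constant $2n+2$ and hence scalar curvature $\scal_h = (2n+2)(2n) = 4n(n+1)$.

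For the reverse direction one runs the same identity backwards. Given a Kähler--Einstein orbifold metric $h$ on the Fano orbifold $\mathcal{Z}_f$ with $\scal_h = 4n(n+1)$, i.e. $\operatorname{Ric}^T = (2n+2)\,h$, one reconstructs the Sasakian metric $g = \pi^{*}h + \eta\otimes\eta$ on $L_f$ from the contact form $\eta$ of the natural quasi-regular structure, whose curvature $d\eta$ projects to a constant multiple of the Kähler form $\omega_h$. The identity above then returns $\operatorname{Ric}_g(\tilde X,\tilde Y) = (2n+2)\,g^T - 2\,g^T = 2n\,g^T$ on horizontal pairs and $\operatorname{Ric}_g(\xi,\cdot) = 2n\,\eta$, so $\operatorname{Ric}_g = 2n\,g$ and $g$ is Sasaki--Einstein.

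The main obstacle is making the reverse reconstruction legitimate, which has two ingredients. First, the curvature identities must be justified in the orbifold category rather than the smooth one; this is routine, since they are local statements valid in each uniformizing chart where $\pi$ is a genuine Riemannian submersion with totally geodesic fibres. The substantive point is the cohomological compatibility: for a Kähler--Einstein representative on $\mathcal{Z}_f$ to correspond to a Sasakian metric with Reeb field $\xi$, the transverse Kähler class $[d\eta]$ must be a positive multiple of the basic first Chern class $c_1^{\mathrm{orb}}(\mathcal{F}_\xi)$. In the present setting this is not an extra hypothesis: the orbifold circle bundle $L_f\to\mathcal{Z}_f$ is the restriction of the orbifold Hopf fibration $S^{2n+1}_{\mathbf{w}}\to\mathbb{P}(\mathbf{w})$, so $\eta$ is already a connection form whose curvature represents the transverse Kähler class, and the Fano condition $|\mathbf{w}|-d_f>0$ secures the positivity of $c_1^{\mathrm{orb}}(\mathcal{Z}_f)$. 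With these in place, the remaining verification is the curvature bookkeeping already displayed.
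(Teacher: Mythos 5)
The paper does not prove this statement: it is quoted from Boyer--Galicki \cite{BG1} as the black box that converts K\"ahler--Einstein Fano orbifolds into Sasaki--Einstein links, so there is no internal proof to compare yours against. Judged on its own, your argument is the standard one from \cite{BG1} and from Boyer--Galicki's book: the transverse Ricci identities for the orbifold Riemannian submersion $\pi\colon L_f\to\mathcal{Z}_f$ with totally geodesic Reeb fibres, namely $\operatorname{Ric}_g(\tilde X,\tilde Y)=\operatorname{Ric}^T(X,Y)-2g^T(\tilde X,\tilde Y)$ and $\operatorname{Ric}_g(\xi,\xi)=2n$, do give the equivalence between $\operatorname{Ric}_g=2n\,g$ and $\operatorname{Ric}^T=(2n+2)h$, hence scalar curvature $4n(n+1)$. (Your convention $n=\dim_{\mathbb C}\mathcal{Z}_f$ is the one that makes this constant come out right, even though it clashes with the paper's indexing of $f$ in $n+1$ variables.)

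The one step you should tighten is the normalization in the reverse direction. A K\"ahler--Einstein orbifold metric $h$ with Einstein constant $2n+2$ necessarily has K\"ahler class $\tfrac{\pi}{n+1}c_1^{\mathrm{orb}}(\mathcal{Z}_f)$, whereas the class represented by $\tfrac12 d\eta$ for the contact form inherited from the weighted Sasakian sphere is a different positive multiple of the same generator; the two agree only for a particular value of the Fano index $|\mathbf w|-d_f$. Positivity of $c_1^{\mathrm{orb}}$, which is all the Fano condition gives you, guarantees proportionality but not equality of these classes, so $\pi^{*}h+\eta\otimes\eta$ with the original $\eta$ is generally not Einstein. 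The standard repair is a transverse homothety (a type I deformation $\xi\mapsto a^{-1}\xi$, $\eta\mapsto a\eta$ for the appropriate $a>0$, followed if necessary by a type II deformation $\eta\mapsto\eta+d^{c}\varphi$ with $\varphi$ basic to move within the K\"ahler class), after which your curvature bookkeeping applies verbatim and yields a Sasaki--Einstein structure on the same manifold $L_f$. This is routine but not optional, and as written your last paragraph asserts the cohomological compatibility rather than arranging it.
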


In \cite{JK}, Johnson and Kollár give a list of 4442 quasi-smooth Fano 3 -folds $\mathcal{Z}$ anticanonically embedded in weighted projective 4-spaces $\mathbb{P}(\mathbf{w})$. Moreover, they show that 1936 of these 3 -folds admit Kähler-Einstein metrics. Thus, such Fano 3-folds give rise to Sasaki-Einstein metrics on smooth 7-manifolds  realized as links  of isolated hypersurface singularities defined by weighted homogenous polynomials. In \cite{BGN2} they extracted from this list 184 2-connected rational homology 7-spheres. They also determined the order of $H_{3}\left (L_{f}(\mathbf{w}, d), \mathbb{Z}\right)$. In \cite{Go}, Gomez used Orlik's conjecture to  calculate the homology of 10 elements of the list given in \cite{BGN2}, all the 7-manifolds found there are links of chain type singularities.

In this paper we completely determine the third homology group for 1673 
of 2-connected Sasaki-Einstein 7-manifolds from the list 1936 smooth 7-manifolds  realized as links  of isolated hypersurface singularities from the list given in \cite{BGN2}. Among them, we found  52 new examples of 2-connected rational homology spheres admitting Sasaki-Einstein metrics. 
We also found  124 new examples of 2-connected  Sasaki-Einstein 7-manifolds of the form $\#2k(S^3\times S^4)$, improving a result of Boyer in \cite{Bo}, see Theorem 3.1.  Of that lot, 118 come from the list given in \cite{JK}, the other 6 examples are links of quasi-smooth Fano 3-folds coming from Reids list of 95 weighted codimension 1 K3 surfaces, where all members of the list  but 4 admit K\"ahler-Einstein metrics, see \cite{Ch}.

\subsection{\textit{The Topology of Links and Orlik's Conjecture}} 

In this section we  review some classical results on the topology of links of quasi-smooth hypersurface singularities.  Recall that 
the Alexander  polynomial $\Delta_f(t)$ in \cite{Mi} associated to a link $L_f$ of dimension $2 n-1$ is the characteristic polynomial of the monodromy map $$h_*: H_{n}(F, \mathbb{Z}) \rightarrow H_{n}(F, \mathbb{Z})$$ induced by the $S_{\mathrm{w}}^1$-action on the Milnor fibre $F$. Therefore, 
$\Delta_f(t)=\operatorname{det}\left(t {\mathbb I}-h_*\right)$. 
Now both $F$ and its closure $\bar{F}$ are homotopy equivalent to a bouquet of $n$-spheres $S^n \vee \cdots \vee S^n,$ and the boundary of $\bar{F}$ is the link $L_f$, which is $(n-2)$-connected. The Betti numbers $b_{n-1}\left(L_f\right)=b_{n}\left(L_f\right)$ equal the number of factors of $(t-1)$ in $\Delta_f(t)$. The following are standard facts, see \cite{BBG}. 
\begin{enumerate}
\item $L_f$ is a rational homology sphere if and only if $\Delta_f(1) \neq 0.$
\item $L_f$  homotopy sphere if and only if $\Delta_f(1)= \pm 1.$ 
\item If  $L_f$ is a rational homology sphere, then the order of $H_{n-1}\left(L_f, \mathbb{Z}\right)$ equals $|\Delta(1)|.$ 
\end{enumerate}

There is a remarkable  theorem of Levine (see \cite{Mi}, page 69) that determines the diffeomorphism type for rational homology spheres. More preceisely, we have  
\begin{theorem} Let $L_f$ be homeomorphic to the $(2n-1)$-sphere for $n$ odd. 
 $L_f$ is diffeomorphic to  the standard sphere if $\Delta_f(-1) \equiv \pm 1(\bmod 8)$ and $L_f$ is diffeomorphic to the exotic Kervaire sphere if $\Delta_f(-1) \equiv \pm 3 (\bmod 8).$ 
 \end{theorem}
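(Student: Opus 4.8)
The plan is to exhibit $L_f$ as the boundary of the parallelizable Milnor fiber and to read off its class in the Kervaire--Milnor group $bP_{2n}$ of homotopy $(2n-1)$-spheres that bound parallelizable manifolds. By the Milnor fibration theorem recalled above, the closed fiber $\bar F$ is a parallelizable $2n$-manifold with $\partial\bar F=L_f$ having the homotopy type of a wedge of $n$-spheres. Since $n$ is odd we have $2n\equiv 2\pmod 8$'s precursor $2n\equiv 2\pmod 4$, and in this range Kervaire--Milnor theory tells us that $bP_{2n}$ is $0$ or $\mathbb{Z}/2$, the nontrivial element (when it occurs) being represented by the Kervaire sphere. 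The invariant detecting $[L_f]\in bP_{2n}$ is the Arf--Kervaire invariant $c(\bar F)\in\mathbb{Z}/2$, with $c(\bar F)=0$ for the standard sphere and $c(\bar F)=1$ for the Kervaire sphere. So everything reduces to computing $c(\bar F)$ and matching it against the residue of $\Delta_f(-1)$ modulo $8$.

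Next I would introduce the quadratic refinement and pass to the Seifert form. Parallelizability of $\bar F$ endows the mod-$2$ intersection form on $H_n(\bar F;\mathbb{Z}/2)$ with a framing-induced quadratic refinement $q$, and by definition $c(\bar F)=\mathrm{Arf}(q)$; concretely $q$ is represented by $x\mapsto V(x,x)\bmod 2$, where $V$ is the integral Seifert (variation) form of the singularity on $H_n(F)$. The monodromy is related to $V$ by $h_*=(-1)^{n+1}(V^{\mathrm T})^{-1}V$, so for $n$ odd $h_*=(V^{\mathrm T})^{-1}V$ and $\mathbb{I}+h_*=(V^{\mathrm T})^{-1}(V+V^{\mathrm T})$. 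Combining this with $\Delta_f(t)=\det(t\mathbb{I}-h_*)$ gives
\[
\Delta_f(-1)=(-1)^{\mu}\det(\mathbb{I}+h_*)=\pm\,\frac{\det(V+V^{\mathrm T})}{\det V},\qquad \mu=\operatorname{rank}H_n(F).
\]
The hypothesis that $L_f$ is a homotopy sphere forces $\Delta_f(1)=\pm1$, hence $\det(\mathbb{I}-h_*)=\pm1$ and $V$ is unimodular; therefore $\Delta_f(-1)=\pm\det(V+V^{\mathrm T})$ is an odd integer, namely the discriminant of the symmetric integral form $V+V^{\mathrm T}$ whose mod-$2$ reduction is the one refined by $q$.

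The crux, and the step I expect to be the main obstacle, is the purely arithmetic identification
\[
c(\bar F)=\mathrm{Arf}(q)=0 \iff \Delta_f(-1)\equiv\pm1\pmod 8 .
\]
Here one evaluates the Gauss sum $\sum_{x}\zeta^{\,\tilde q(x)}$ associated to $q$, where $\zeta$ is a primitive fourth root of unity and $\tilde q$ an integral quadratic lift built from $V+V^{\mathrm T}$. By the Milgram--Brown formula this Gauss sum simultaneously encodes $\mathrm{Arf}(q)$ and a signature-type term, while on the other hand its modulus and argument are governed by the odd discriminant $\det(V+V^{\mathrm T})=\pm\Delta_f(-1)$. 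The dichotomy $\pm1$ versus $\pm3\pmod 8$ is exactly the dichotomy that fixes whether the normalized Gauss sum equals $+1$ or $-1$, i.e.\ whether $\mathrm{Arf}(q)$ vanishes or not; carrying the computation through yields $c(\bar F)=0$ when $\Delta_f(-1)\equiv\pm1$ and $c(\bar F)=1$ when $\Delta_f(-1)\equiv\pm3$. Feeding this back into the Kervaire--Milnor dictionary of the first paragraph gives precisely the stated conclusion. It remains only to note that in the dimensions relevant to our application the Kervaire sphere is genuinely exotic, i.e.\ $bP_{2n}=\mathbb{Z}/2$; for instance for $2n-1=9$ one has $bP_{10}=\mathbb{Z}/2$ because $2n=10$ is not one of the Kervaire-invariant-one dimensions $\{2,6,14,30,62,126\}$.
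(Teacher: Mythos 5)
Your proposal is essentially correct, but note that the paper does not prove this statement at all: it is quoted verbatim as Levine's theorem, with a citation to page 69 of Milnor's book \cite{Mi}. What you have written is, in outline, a faithful reconstruction of the classical Levine argument: realize $L_f$ as the boundary of the parallelizable Milnor fiber, observe that for $n$ odd its class in $bP_{2n}$ is detected by the Arf invariant of the framing quadratic refinement, identify that refinement with $x\mapsto V(x,x)\bmod 2$ for the Seifert form $V$, use $h_*=(-1)^{n+1}(V^{\mathrm T})^{-1}V$ together with unimodularity of $V$ (which follows from $\Delta_f(1)=\pm1$) to get $\Delta_f(-1)=\pm\det(V+V^{\mathrm T})$, and finally convert the residue of this odd determinant mod $8$ into the Arf invariant. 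The one place where your argument is thin is precisely the step you flag as the crux: the equivalence ``$\det(V+V^{\mathrm T})\equiv\pm1\pmod 8$ iff $\mathrm{Arf}(q)=0$'' is asserted via the Brown--Milgram Gauss sum rather than carried out; this is a genuine classical fact (van der Blij's lemma / Levine's arithmetic lemma for even symmetric forms of odd determinant), so invoking it is legitimate, but as written your proof defers the entire content of the theorem to an uncited computation. Your closing remark that $bP_{10}=\mathbb{Z}/2$, so that the Kervaire $9$-sphere is genuinely exotic in the dimension relevant to Theorem 3.2, is correct and is a useful point that the paper leaves implicit.
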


In the case that $f$ is a weighted homogeneous polynomial  there is an algorithm due to Milnor and Orlik \cite{MO} to calculate  the free part of $H_{n-1}\left(L_f, \mathbb{Z}\right).$ The authors associate to any monic polynomial $f$ with roots $\alpha_1, \ldots, \alpha_k \in \mathbb{C}^*$ its divisor
$$
\operatorname{div} f=\left\langle\alpha_1\right\rangle+\cdots+\left\langle\alpha_k\right\rangle
$$
as an element of the integral ring $\mathbb{Z}\left[\mathbb{C}^*\right].$ Let  $\Lambda_n=\operatorname{div}\left(t^n-1\right)$. 
Then the divisor of $\Delta_f(t)$ is given by
\begin{equation}
\operatorname{div} \Delta_f=\prod_{i=0}^n\left(\frac{\Lambda_{u_i}}{v_i}-1\right),
\end{equation}
where the $u_i's$  and $v_i's$ are given  terms of the degree $d$ of $f$ and the weight vector ${\bf w}=(w_0,\ldots w_n)$ by the equations 
\begin{equation}
u_i=\frac{d}{\operatorname{gcd}\left(d, w_i\right)}, \quad v_i=\frac{w_i}{\operatorname{gcd}\left(d, w_i\right)}.
\end{equation}

Using the relations $\Lambda_a \Lambda_b=\operatorname{gcd}(a, b) \Lambda_{\operatorname{lcm}(a, b)}$, Equation (2) takes the form
\begin{equation}
\operatorname{div} \Delta_f=(-1)^{n+1}+\sum a_j \Lambda_j,
\end{equation}

where $a_j \in \mathbb{Z}$ 
 and the sum is taken over the set of all least common multiples of all combinations of the $u_0, \ldots, u_n$. Then the Alexander polynomial has an alternative expression given by
$$
\Delta_f(t)=(t-1)^{(-1)^n} \prod_j\left(t^j-1\right)^{a_j},
$$
and

\begin{equation}
b_{n-1}\left(L_f\right)=(-1)^{n+1}+\sum_j a_j.
\end{equation}

Moreover, Milnor and Orlik gave and explicit formula to calculate the free part of $H_{n-1}\left(L_f, \mathbb{Z}\right):$ 
\begin{equation}
b_{n-1}\left(L_{f}\right)=\sum(-1)^{n+1-s} \frac{u_{i_{1}} \cdots u_{i_{s}}}{v_{i_{1}} \cdots v_{i_{s}} \operatorname{lcm}\left(u_{i_{1}}, \ldots, u_{i_{s}}\right)},
\end{equation}
where the sum is taken over all the $2^{n+1}$ subsets $\left\{i_{1}, \ldots, i_{s}\right\}$ of $\{0, \ldots, n\}$. In \cite{Or}, Orlik gave a conjecture which allows to determine the torsion of homology of the link solely in terms of the weight of $f.$

\begin{conjecture}[Orlik] 
Consider  $\left\{i_{1}, \ldots, i_{s}\right\} \subset\{0,1, \ldots, n\}$ the set of ordered set of $s$ indices, that is, $i_{1}<i_{2}<\cdots<i_{s}$. Let us denote by I its power set (consisting of all of the $2^{s}$ subsets of the set), and by $J$ the set of all proper subsets. Given a $(2 n+2)$-tuple $(\mathbf{u}, \mathbf{v})=\left(u_{0}, \ldots, u_{n}, v_{0}, \ldots, v_{n}\right)$ of integers, let us  define inductively a set of $2^{s}$ positive integers, one for each ordered element of $I$, as follows:
$$
c_{\emptyset}=\operatorname{gcd}\left(u_{0}, \ldots, u_{n}\right),
$$
and if $\left\{i_{1}, \ldots, i_{s}\right\} \in I$ is ordered, then
$$
c_{i_{1}, \ldots, i_{s}}=\frac{\operatorname{gcd}\left(u_{0}, \ldots, \hat{u}_{i_{1}}, \ldots, \hat{u}_{i_{s}}, \ldots, u_{n}\right)}{\prod_{J} c_{j_{1}, \ldots j_{t}}} .
$$
Similarly, we also define a set of $2^{s}$ real numbers by
$$
k_{\emptyset}=\epsilon_{n+1},
$$
and
$$
k_{i_{1}, \ldots, i_{s}}=\epsilon_{n-s+1} \sum_{I}(-1)^{s-t} \frac{u_{j_{1}} \cdots u_{j_{t}}}{v_{j_{1}} \cdots v_{j_{t}} \operatorname{lcm}\left(u_{j_{1}}, \ldots, u_{j_{t}}\right)}
$$
where
$$
\epsilon_{n-s+1}= \begin{cases}0 & \text { if } n-s+1 \text { is even } \\ 1 & \text { if } n-s+1 \text { is odd }\end{cases}
$$
respectively. Finally, for any $j$ such that $1 \leq j \leq r=\left\lfloor\max \left\{k_{i_{1}, \ldots, i_{s}}\right\}\right\rfloor$, where $\lfloor x\rfloor$ is the greatest integer less than or equal to $x$, we set
$
d_{j}=\prod_{k_{i_{1}, \ldots, i_{s}} \geq j} c_{i_{1}, \ldots, i_{s}}.$
Then 

\begin{equation}
H_{n-1}\left(L_{f}, \mathbb{Z}\right)_{\text {tor }}=\mathbb{Z} / d_{1} \oplus \cdots \oplus \mathbb{Z} / d_{r} .\end{equation}

\end{conjecture}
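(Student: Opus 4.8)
The plan is to reduce the torsion computation to a Smith normal form problem for the integral monodromy, and then to match that invariant against the arithmetic data $(c_{i_1,\dots,i_s},k_{i_1,\dots,i_s})$ manufactured from the weights. First I would record the topological input. Since $L_f$ bounds the closure $\bar F$ of the Milnor fibre and is $(n-2)$-connected, the Wang sequence of the Milnor fibration (together with the standard relation between a fibred link and its integral monodromy, as in Milnor) identifies
\begin{equation*}
H_{n-1}(L_f,\mathbb{Z}) \cong \operatorname{coker}\bigl(h_* - \mathbb{I} \colon H_n(F,\mathbb{Z}) \to H_n(F,\mathbb{Z})\bigr),
\end{equation*}
where $h_*$ is the \emph{integral} monodromy on $H_n(F,\mathbb{Z}) \cong \mathbb{Z}^{\mu}$. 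The free rank of this cokernel recovers $b_{n-1}(L_f)$, already given by the Milnor–Orlik formula in terms of the weights, and the torsion subgroup is the sum of the $\mathbb{Z}/e_j$ over the invariant factors $e_j>1$ in the Smith normal form of the integer matrix $h_*-\mathbb{I}$. The whole statement thus becomes the assertion that these nontrivial invariant factors are exactly $d_1,\dots,d_r$.

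Next I would set up the module-theoretic framework that renders the combinatorial data visible. Regard $M:=H_n(F,\mathbb{Z})$ as a finitely generated module over $R=\mathbb{Z}[t,t^{-1}]$ with $t$ acting by $h_*$; what we want is the torsion of $M/(t-1)M$. Rationally the eigenvalue structure is forced by the weights, since $\Delta_f(t)=\det(t\mathbb{I}-h_*)=\prod_j(t^j-1)^{a_j}$ is computed from $(\mathbf{u},\mathbf{v})$ by the divisor formula, so $M\otimes\mathbb{Q}$ is a known sum of cyclotomic pieces; the content of the conjecture is the integral refinement. The data in the statement is tailored to it: the $c_{i_1,\dots,i_s}$ are gcd's of the $u_i$, measuring the cyclotomic factors common to a subset of the coordinate monodromies, while each $k_{i_1,\dots,i_s}$ is, up to the parity factor $\epsilon$, a partial Milnor–Orlik sum of exactly the shape of the free-rank formula, hence a multiplicity. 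The prescription $d_j=\prod_{k_{i_1,\dots,i_s}\geq j} c_{i_1,\dots,i_s}$ is precisely the rule assembling invariant factors from a direct sum of cyclic modules whose orders are the $c$'s with multiplicities recorded by the $k$'s. So the objective reduces to producing an integral splitting of the relevant part of $M$ into cyclic $R$-summands indexed by the ordered subsets $\{i_1,\dots,i_s\}$, of order $c_{i_1,\dots,i_s}$ and multiplicity governed by $k_{i_1,\dots,i_s}$.

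The mechanism I would try to exploit for that splitting is a Sebastiani–Thom/join decomposition at the integral level. For a Brieskorn–Pham polynomial $f=\sum_i z_i^{a_i}$ one has $M\cong\bigotimes_i \widetilde H_0(F_i)$ as $R$-modules with $h_*=\bigotimes_i h_{*,i}$ and $\widetilde H_0(F_i)\cong \mathbb{Z}[t]/(1+t+\cdots+t^{a_i-1})$; tensoring these cyclic modules and carrying out the gcd/lcm bookkeeping of their cyclotomic factors produces exactly the $c$'s and $k$'s, so the conjecture follows directly there. I would then attempt to propagate this to a general weighted homogeneous $f$, either by deforming $f$ within its weight class to a normal form admitting such an integral tensor factorization, or by constructing the splitting of $M$ inductively from the eigenspace filtration of $h_*$ and checking at each stage that the integral overlaps are controlled by the subset gcd's $c_{i_1,\dots,i_s}$.

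The decisive and hardest step is exactly this last propagation. The weights determine $\Delta_f$, hence the eigenvalues of $h_*$ and the free rank, but they do \emph{not} manifestly determine the integral module $M$: the Smith normal form of $h_*-\mathbb{I}$ is a strictly finer invariant than the characteristic polynomial, and a general weighted homogeneous isolated singularity need not be a Thom–Sebastiani sum of one-variable factors, so no integral tensor factorization of $M$ is available a priori. The whole weight of the conjecture therefore rests on showing that the invariant factors of $h_*-\mathbb{I}$ nevertheless depend only on $(\mathbf{u},\mathbf{v})$ and organize themselves according to the subset-gcd pattern encoded by the $c_{i_1,\dots,i_s}$. This is precisely where a uniform argument for all weighted homogeneous $f$ is not currently available and where one must invoke additional structure; the classes for which the required integral decomposition of $M$ can actually be established are the chain type, cycle type, and their Thom–Sebastiani sums treated by Hertling and Mase, and these are the cases on which the applications in this paper rely.
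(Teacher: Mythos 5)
The statement you were asked to prove is presented in the paper as a \emph{conjecture} (attributed to Orlik), and the paper offers no proof of it: it only records that the conjecture was known in special cases and invokes the theorem of Hertling and Mase establishing it for chain type singularities, cycle type singularities, and Thom--Sebastiani sums of these, which are exactly the classes used in the paper's tables and computations. So there is no proof in the paper to compare against, and your proposal has to be judged as an attempt at what is still an open problem.

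Read that way, your write-up is an accurate and well-organized account of why the conjecture is plausible and where it is hard, but it is not a proof, and you say so yourself. The reduction via the Wang sequence to $H_{n-1}(L_f,\mathbb{Z})\cong\operatorname{coker}(h_*-\mathbb{I})$, and hence to the Smith normal form of the integral monodromy, is correct and is the standard starting point; the Brieskorn--Pham case does follow from the integral Sebastiani--Thom tensor decomposition into cyclic $\mathbb{Z}[t]$-modules, and your reading of the $c_{i_1,\ldots,i_s}$, the $k_{i_1,\ldots,i_s}$, and the rule $d_j=\prod_{k\geq j}c$ as bookkeeping for the invariant factors of a direct sum of cyclic pieces is the right interpretation of the combinatorial data. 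The gap is exactly the one you name in your final paragraph: the weight data $(\mathbf{u},\mathbf{v})$ determine $\Delta_f$ but not, a priori, the integral isomorphism class of $(H_n(F,\mathbb{Z}),h_*)$, and no deformation or filtration argument is known that produces the required integral cyclic decomposition for a general weighted homogeneous isolated singularity. That step is the entire content of the conjecture, which remains open in general; Hertling and Mase close it only for invertible polynomials (chain, cycle, and their Thom--Sebastiani sums), essentially by the block-decomposition strategy you sketch. Your proposal therefore cannot be completed into a proof of the statement as written, but the incompleteness reflects the actual status of the statement rather than an error in your reasoning, and the paper itself relies only on the proven special cases.
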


This conjecture was known to hold in certain special cases \cite{OR}. In a rather recent paper, Hertling and Mase \cite{HM}, extend the list of possible cases and showed that this conjecture is true  for the following cases:

\begin{enumerate}

\item {\it Chain type singularity:} that is,   a quasihomogeneous singularity of the form
$$
f=f\left(x_{1}, \ldots, x_{n}\right)=x_{1}^{a_{1}+1}+\sum_{i=2}^{n} x_{i-1} x_{i}^{a_{i}}
$$
for some $n \in \mathbb{N}$ and some $a_{1}, \ldots, a_{n} \in \mathbb{N}$. 
\item {\it Cycle type singularity:} a quasihomogeneous singularity of the form
$$
f=f\left(x_{1}, \ldots, x_{n}\right)=\sum_{i=1}^{n-1} x_{i}^{a_{i}} x_{i+1}+x_{n}^{a_{n}} x_{1}
$$
for some $n \in \mathbb{Z}_{\geq 2}$ and some $a_{1}, \ldots, a_{n} \in \mathbb{N}$ which satisfy for even $n$ neither $a_{j}=1$ for all even $j$ nor $a_{j}=1$ for all odd $j.$
\item {\it Thom-Sebastiani iterated sums of singularties of chain type or cyclic type.}  Recall, for  $f$ and $g$ be singularities, the Thom-Sebastiani sum is given by  $$f+g=f\left(x_{1}, \ldots, x_{n_{f}}\right)+g\left(x_{n_{f}+1}, \ldots, x_{n_{f}+n_{g}}\right).$$ Any iterated Thom-Sebastiani sum of chain type singularities and cycle type singularities are also called {\it invertible polynomials}. 
\item Although  {\it Brieskorn-Pham singularities}, or {\it BP singularities} $$f=f\left(x_{1}, \ldots, x_{n}\right)=\sum_{i=1}^{n} x_{i}^{a_{i}}$$ for some $n \in \mathbb{N}$ and some $a_{1}, \ldots, a_{n} \in \mathbb{Z}_{\geq 2}$ are special cases of chain type singularities we prefer to label them as an  independent type of singularity.
\end{enumerate}
In order to use Orlik's conjecture, we need to find from the two lists of Kähler-Einstein oribifolds aforementioned, elements with weights $\mathbf{w}=\left(w_{0}, w_{1}, w_{2}, w_{3}, w_{4}\right)$ and degree $d=\left(w_{0}+\cdots+w_{4}\right)-1$  that can be represented by BP singularities, chain type singularities, cycle type singularities or Thom Sebastiani sum of these types of singularities.
Thus, given a weight vector $\mathbf{w}=\left(w_{0}, w_{1}, w_{2}, w_{3}, w_{4}\right)$ one needs to determine if there exist exponents $a_{i}$'s  that  verify certain arithmetic condition. We include these conditions in the table below.

Most of the examples that we describe in this article have large weights and degrees, so in order to avoid monotonous  calculations,   codes were implemented in Matlab (see $(d)$ in Appendix) to determine  for a given  weight vector $\mathbf{w}=\left(w_{0}, w_{1}, w_{2}, w_{3}, w_{4}\right)$ the exponents $a_{i}$'s, such that the singularity can be written as a chain type, cycle type or Thom-Sebastiani sum of them. We also wrote a  code to computes the Betti numbers and the numbers $d_{i}$ which generate the torsion in $H_{3}\left(L_{f}, \mathbb{Z}\right)$.

\medskip

{\small
\begin{tabular}{| c | m{5cm} | m{6.4cm} |   } \hline
Type & \quad\quad\quad\quad\quad  Polynomial & \quad\quad \quad\quad \quad\quad \quad\quad Condition  \\ \hline \hline 

\tiny{BP}  & $z_{0}^{a_{0}}+z_{1}^{a_{1}}+z_{2}^{a_{2}}+z_{3}^{a_{3}}+z_{4}^{a_{4}}$ & $d=a_{0}w_{0}=a_{1}w_{1}=a_{2}w_{2}=a_{3}w_{3}=a_{4}w_{4}$   \\ \hline

\tiny{Chain}  & $z_{0}^{a_{0}}+z_{0}z_{1}^{a_{1}}+z_{1}z_{2}^{a_{2}}+z_{2}z_{3}^{a_{3}}+z_{3}z_{4}^{a_{4}}$ & $d=a_{0}w_{0}=w_{0}+a_{1}w_{1}=w_{1}+a_{2}w_{2}=w_{2}+ a_{3}w_{3}=w_{3}+a_{4}w_{4}$\\   \hline

\tiny{Cycle}  & $z_{4}z_{0}^{a_{0}}+z_{0}z_{1}^{a_{1}}+z_{1}z_{2}^{a_{2}}+z_{2}z_{3}^{a_{3}}+z_{3}z_{4}^{a_{4}}$ & $d=w_{4}+a_{0}w_{0}=w_{0}+a_{1}w_{1}=w_{1}+a_{2}w_{2}=w_{2}+ a_{3}w_{3}=w_{3}+a_{4}w_{4}$    \\\hline

 & $z_{0}^{a_{0}}+z_{1}^{a_{1}}+z_{1}z_{2}^{a_{2}}+z_{2}z_{3}^{a_{3}}+z_{3}z_{4}^{a_{4}}$ & $d=a_{0}w_{0}=a_{1}w_{1}=w_{1}+a_{2}w_{2}=w_{2}+ a_{3}w_{3}=w_{3}+a_{4}w_{4}$  \\ 
 
\tiny{BP + Chain}  & $z_{0}^{a_{0}}+z_{1}^{a_{1}}+z_{2}^{a_{2}}+z_{2}z_{3}^{a_{3}}+z_{3}z_{4}^{a_{4}}$ & $d=a_{0}w_{0}=a_{1}w_{1}=a_{2}w_{2}=w_{2}+ a_{3}w_{3}=w_{3}+a_{4}w_{4}$\\

  & $z_{0}^{a_{0}}+z_{1}^{a_{1}}+z_{2}^{a_{2}}+z_{3}^{a_{3}}+z_{3}z_{4}^{a_{4}}$ & $d=a_{0}w_{0}=a_{1}w_{1}=a_{2}w_{2}= a_{3}w_{3}=w_{3}+a_{4}w_{4}$  \\ \hline

 & $z_{0}^{a_{0}}+z_{4}z_{1}^{a_{1}}+z_{1}z_{2}^{a_{2}}+z_{2}z_{3}^{a_{3}}+z_{3}z_{4}^{a_{4}}$ & $d=a_{0}w_{0}=w_{4}+a_{1}w_{1}=w_{1}+a_{2}w_{2}=w_{2}+ a_{3}w_{3}=w_{3}+a_{4}w_{4}$  \\ 

\tiny{BP + Cycle}  & $z_{0}^{a_{0}}+z_{1}^{a_{1}}+z_{4}z_{2}^{a_{2}}+z_{2}z_{3}^{a_{3}}+z_{3}z_{4}^{a_{4}}$ & $d=a_{0}w_{0}=a_{1}w_{1}=w_{4}+a_{2}w_{2}=w_{2}+ a_{3}w_{3}=w_{3}+a_{4}w_{4}$\\

  & $z_{0}^{a_{0}}+z_{1}^{a_{1}}+z_{2}^{a_{2}}+z_{4}z_{3}^{a_{3}}+z_{3}z_{4}^{a_{4}}$ & $d=a_{0}w_{0}=a_{1}w_{1}=a_{2}w_{2}= w_{4}+a_{3}w_{3}=w_{3}+a_{4}w_{4}$  \\ \hline

  \tiny{Chain + Chain} & $z_{0}^{a_{0}}+z_{0}z_{1}^{a_{1}}+z_{2}^{a_{2}}+z_{2}z_{3}^{a_{3}}+z_{3}z_{4}^{a_{4}}$ & $d=a_{0}w_{0}=w_{0}+a_{1}w_{1}=a_{2}w_{2}= w_{2}+a_{3}w_{3}=w_{3}+a_{4}w_{4}$  \\ \hline
  
 \tiny{Chain + Cycle} & $z_{0}^{a_{0}}+z_{0}z_{1}^{a_{1}}+z_{4}z_{2}^{a_{2}}+z_{2}z_{3}^{a_{3}}+z_{3}z_{4}^{a_{4}}$ & $d=a_{0}w_{0}=w_{0}+a_{1}w_{1}=w_{4}+a_{2}w_{2}= w_{2}+a_{3}w_{3}=w_{3}+a_{4}w_{4}$ \\ 
 & $z_{0}^{a_{0}}+z_{0}z_{1}^{a_{1}}+z_{1}z_{2}^{a_{2}}+z_{4}z_{3}^{a_{3}}+z_{3}z_{4}^{a_{4}}$ & $d=a_{0}w_{0}=w_{0}+a_{1}w_{1}=w_{1}+a_{2}w_{2}= w_{4}+a_{3}w_{3}=w_{3}+a_{4}w_{4}$ \\ \hline
 
    \tiny{Cycle + Cycle} & $z_{1}z_{0}^{a_{0}}+z_{0}z_{1}^{a_{1}}+z_{4}z_{2}^{a_{2}}+z_{2}z_{3}^{a_{3}}+z_{3}z_{4}^{a_{4}}$ & $d=w_{1}+a_{0}w_{0}=w_{0}+a_{1}w_{1}=w_{4}+a_{2}w_{2}= w_{2}+a_{3}w_{3}=w_{3}+a_{4}w_{4}$ \\ 
 \hline
     
     \tiny{BP + Chain + Chain} & $z_{0}^{a_{0}}+z_{1}^{a_{1}}+z_{1}z_{2}^{a_{2}}+z_{3}^{a_{3}}+z_{3}z_{4}^{a_{4}}$ & $d=a_{0}w_{0}=a_{1}w_{1}=w_{1}+a_{2}w_{2}= a_{3}w_{3}=w_{4}+w_{3}+a_{4}w_{4}$\\ 
     \hline
     
     \tiny{BP + Chain + Cycle} & $z_{0}^{a_{0}}+z_{1}^{a_{1}}+z_{1}z_{2}^{a_{2}}+z_{4}z_{3}^{a_{3}}+z_{3}z_{4}^{a_{4}}$ & $d=a_{0}w_{0}=a_{1}w_{1}=w_{1}+a_{2}w_{2}= w_{4}+a_{3}w_{3}=w_{4}+w_{3}+a_{4}w_{4}$ \\ \hline
     \tiny{BP + Cycle + Cycle} & $z_{0}^{a_{0}}+z_{2}z_{1}^{a_{1}}+z_{1}z_{2}^{a_{2}}+z_{4}z_{3}^{a_{3}}+z_{3}z_{4}^{a_{4}}$ & $d=a_{0}w_{0}=w_{2}+a_{1}w_{1}=w_{1}+a_{2}w_{2}= w_{4}+a_{3}w_{3}=w_{4}+w_{3}+a_{4}w_{4}$ \\ \hline
\end{tabular}
}
\medskip

Next, we discuss an interesting result on the topology of the link if 
the degree $d$ and the weight vector ${\bf w}$ are such that $\operatorname{gcd}(d,w_{i})=1$ for all $i.$ Several elements on the tables we present in Section 3, satisfy this condition.  Notice that, if the type of singularities is restricted to the ten cases described on the  table given above, then $\operatorname{gcd}(d,w_{i})=1$  for all $i$ forces the singularity to be  of cycle type. We restrict to dimension 7, but this remark can be easily generalized for any dimension.

\begin{lemma} 
Consider a 7-manifold $M$ arising as a link of a hypersurface singularity with degree $d$ and such that $gcd(d,w_{i})=1$ for all $i=0,\dots 4$, then 
$\mu + 1 = d(b_{3}+1)$ and $H_{3}(M,\ZZ)_{tor}=\ZZ_{d}.$ In particular, for hypersurface singularities that determine a rational homology sphere with $\operatorname{gcd}\left(d, w_{i}\right)=1$ for all $i$ we obtain exactly $d-1$ vanishing cycles.
\end{lemma}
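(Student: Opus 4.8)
The plan is to exploit the hypothesis $\operatorname{gcd}(d,w_i)=1$ to collapse every divisor computation onto the single cyclotomic term $\Lambda_d$, and then read off both the Betti/Milnor identity and the torsion. Throughout, $n=4$, so the link is $7$-dimensional and $H_3=H_{n-1}$. First I would record the elementary reduction: from $u_i=d/\operatorname{gcd}(d,w_i)$ and $v_i=w_i/\operatorname{gcd}(d,w_i)$, the assumption gives $u_i=d$ and $v_i=w_i$ for every $i$. Consequently every factor in the product formula for $\operatorname{div}\Delta_f$ equals $\tfrac{\Lambda_d}{w_i}-1$, and since $\Lambda_d\Lambda_d=d\,\Lambda_d$, all iterated products reduce to integer multiples of $\Lambda_d$. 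Hence $\operatorname{div}\Delta_f=-1+a_d\Lambda_d$ with a single nonzero coefficient, and the Betti-number formula forces $a_d=b_3+1$.

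For the first identity I would substitute $u_i=d$, $v_i=w_i$, and $\operatorname{lcm}(u_{i_1},\dots,u_{i_s})=d$ (for nonempty index sets) into the Milnor--Orlik sum, so that a subset $S$ of size $s\geq 1$ contributes $(-1)^{5-s}d^{s-1}\prod_{i\in S}w_i^{-1}$, while the empty set contributes $-1$. Summing gives $b_3+1=\sum_{s\geq1}(-1)^{5-s}d^{s-1}e_s$, where $e_s$ is the $s$-th elementary symmetric function of $w_0^{-1},\dots,w_4^{-1}$. Multiplying by $d$ and comparing with the classical product expression for the Milnor number, $\mu=\prod_{i=0}^4\bigl(\tfrac{d}{w_i}-1\bigr)=\sum_{s=0}^5(-1)^{5-s}d^se_s$, the only discrepancy is the $s=0$ term, which equals $-1$; this yields exactly $\mu+1=d(b_3+1)$.

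Then comes the torsion, which I expect to be the crux. The remark preceding the lemma shows that under $\operatorname{gcd}(d,w_i)=1$ the singularity must be of cycle type, so Orlik's conjecture is available by Hertling--Mase. I would run the inductive definition of the integers $c_{i_1,\dots,i_s}$ directly: $c_\emptyset=\operatorname{gcd}(d,\dots,d)=d$, and for every nonempty ordered subset with $1\leq s\leq 4$ the numerator $\operatorname{gcd}$ is again $d$ (one deletes strictly fewer than all five equal entries), while the denominator $\prod_J c_{j_1,\dots,j_t}$ telescopes to exactly $d$ (one empty factor equal to $d$, the rest equal to $1$ by induction), forcing $c_{i_1,\dots,i_s}=1$. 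Next I would note that $\epsilon_{n-s+1}$ vanishes for odd $s$, so only even-sized subsets enter, and the full subset $s=n+1=5$ is also killed by $\epsilon_0=0$ and need not be evaluated. Among the contributing subsets the only $c$ different from $1$ is $c_\emptyset=d$; since $k_\emptyset=\epsilon_5=1$, the factor $c_\emptyset$ enters $d_1$ but no $d_j$ with $j\geq2$. Therefore $d_1=d$ and $d_j=1$ for $j\geq2$, i.e. $H_3(M,\ZZ)_{\mathrm{tor}}=\ZZ_d$.

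Finally, the displayed consequence is immediate: a rational homology sphere has $b_3=0$, so the first identity gives $\mu=d-1$, and since the Milnor fibre is homotopy equivalent to a bouquet of $\mu$ copies of $S^n$ there are precisely $d-1$ vanishing cycles. The main obstacle is the torsion step: one must carefully verify that the inductively defined products $c_{i_1,\dots,i_s}$ all collapse to $1$, handling the empty-product, empty-$\operatorname{lcm}$, and full-subset conventions correctly, and confirm that $c_\emptyset=d$ survives only in $d_1$. Everything else is routine bookkeeping with elementary symmetric functions.
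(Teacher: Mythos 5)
Your proposal is correct and follows essentially the same route as the paper: substituting $u_i=d$, $v_i=w_i$ into the Milnor--Orlik sum and comparing with the product formula for $\mu$, then running Orlik's algorithm to get $c_\emptyset=d$, all other $c_{i_1,\dots,i_s}=1$, hence $d_1=d$ and $d_j=1$ for $j\geq 2$. Your explicit remark that $\gcd(d,w_i)=1$ forces the cycle type (so that Hertling--Mase makes Orlik's conjecture available) is a point the paper only records in the discussion preceding the lemma, and your verification of the telescoping denominator in the definition of $c_{i_1,\dots,i_s}$ is slightly more careful than the paper's, but the substance is identical.
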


\begin{proof}
Indeed, from Equation $(6)$ we have 
$$b_{3} = \sum(-1)^{5-s}\left[\dfrac{u_{i_{1}}\dots u_{i_{s}}}{v_{i_{1}}\dots v_{i_{s}}lcm(u_{i_{1}}\dots u_{i_{s}})}\right],$$
where the sum is over all subsets $\{ i_{1}\dots i_{s}\}\subset\{0,1,2,3,4\}$ with $ i_{1}<\dots< i_{s}$  and
$$u_{i}=\dfrac{d}{gcd(d,w_{i})}, v_{i}=\dfrac{w_{i}}{gcd(d,w_{i})}, \forall i=0,\dots,4.$$
Since $gcd(d,w_{i})=1$ we have  $u_{i}=d$ and $v_{i}=w_{i}$. Then the above formula can be simplified as follows 
$$b_{3} = -1 + \sum \dfrac{1}{w_{i_{1}}}-\sum \dfrac{d}{w_{i_{1}}w_{i_{2}}}+\sum \dfrac{d^{2}}{w_{i_{1}}w_{i_{2}}w_{i_{3}}}-\sum \dfrac{d^{3}}{w_{i_{1}}w_{i_{2}}w_{i_{3}}w_{i_{4}}}+\dfrac{d^{4}}{w_{0}w_{1}w_{2}w_{3}w_{4}}.$$
Writing 
$
    S_{1}=\sum w_{i_{1}}, \ \ S_{2}  =\sum w_{i_{1}}w_{i_{2}}, \ \ S_{3}  =\sum w_{i_{1}}w_{i_{2}}w_{i_{3}}, \ \ S_{4}=\sum w_{i_{1}}w_{i_{2}}w_{i_{3}}w_{i_{4}},
$
we obtain
\begin{equation}
    b_{3} = -1 + \dfrac{S_{4}}{w_{0}\dots w_{4}}-\dfrac{S_{3}d}{w_{0}\dots w_{4}}+\dfrac{S_{2}d^{2}}{w_{0}\dots w_{4}}-\dfrac{S_{1}d^{3}}{w_{0}\dots w_{4}}+\dfrac{d^{4}}{w_{0}\dots w_{4}}
 \end{equation}
    
On the other hand, the Milnor number is given by 
$\mu = \left(\dfrac{d}{w_{0}}-1\right)\dots \left(\dfrac{d}{w_{4}}-1\right).$
Then 
$$\mu = \dfrac{d^{5}-S_{1}d^{4}+S_{2}d^{3}-S_{3}d^{2}+S_{4}d-w_{0}\dots w_{4}}{w_{0}\dots w_{4}}.$$
From (8), we obtain 
$\mu + 1 = d(b_{3}+1).$
For the  second claim, one notices that $gcd(d,w_{i})=1$ implies  $u_{i}=d$ and $v_{i}=w_{i}$, for all $i$. Thus $c_{\emptyset}=gcd(d,\dots, d) = d \ \ \mbox{and} \ \ \ c_{i_{1}\dots i_{s}}=1.$
 Clearly $k_{\emptyset}=1$. Then, for $j=1$, we get $d_{1}=d$, while in other cases, $d_{j}=1$. Therefore, from Equation $(7)$ one concludes that  if $\operatorname{gcd}(d,w_{i})=1$ for all $i=0,\dots n$ then  
   $H_{3}(M,\ZZ)_{tor}=\ZZ_{d}.$
\end{proof}

\section{Results}

In this section we present three tables of new examples of two classes of 7-manifolds: rational homology 7-spheres, and 7-manifolds homeomorphic to connected sums of $S^3\times S^4$, all  admitting Sasaki-Einstein metrics. These new examples are extracted from two lists of Kähler-Einstein orbifolds, the first one given by  Johnson and Kollár  in \cite{JK} (see $(a)$ in Appendix for link to the list) and the other one given by Cheltsov in \cite{Ch}. In these three tables we list the weights, the quasihomogenous polynomial generating the link, the type of singularity,  the degree, the Milnor number and finally the third homology group.  But, first, we discuss the diffeomorphism type of certain homotopy spheres admitting positive Ricci curvature.

\subsection{Positive Ricci curvature on homotopy spheres}

The existence of positive Sasakian metrics on homotopy spheres was study in detail by Boyer, Galicki and collaborators, where the authors exhibited inequivalent  families of homotopy spheres admitting this type of metrics based on their list of 184  rational homology 7-spheres admitting Sasakian-Einstein metrics. We apply  the methods developed in \cite{BGN2} and \cite{BGN3} to the list of the  new 52  rational homology 7-spheres presented in this paper. Let us briefly review some basic facts  on links viewed as branched covers. The main reference here is \cite{BGN3}.

Let $f=f\left(z_1, \ldots, z_m\right)$ be a quasi-smooth weighted homogeneous polynomial of degree $d_f$ in $m$ complex variables, and let $L_f$ denote its link. Let $\mathbf{w}_f=\left(w_1, \ldots, w_m\right)$ be the corresponding weight vector. We consider branched covers constructed as the link $L_g$ of the polynomial
$$
g=z_0^p+f\left(z_1, \ldots, z_m\right), 
$$
for $p>1.$ 
Then $L_g$ is a $p$-fold branched cover of $S^{2 m+1}$ branched over the link $L_f$. The degree of $L_g$ is $d_g=\operatorname{lcm}\left(p, d_f\right)$, and the weight vector is $\mathbf{w}_g=\left(\frac{d_f}{\operatorname{gcd}\left(p, d_f\right)}, \frac{p}{\operatorname{gcd}\left(p, d_f\right)} \mathbf{w}_f\right)$.

We have 
\begin{equation}
|{\bf w}_g|-d_g=\frac{d_f+ p(|{\bf w}_f|-d_f)}{\operatorname{gcd}\left(p, d_f\right)}.
\end{equation}
Thus, from Equation (1) and Theorem 2.1 it follows that $L_g$ admits positive Ricci curvature for all $p>0$ if $L_f$ admits a Sasaki-Einstein metric.

Furthermore, in \cite{BGN3} the authors showed that  the link $L_g$  is a homotopy sphere if and only if  $L_f $ is a rational homology sphere.  Let us recall the argument, since it will be useful for our purpose.  Here we assume  that $\operatorname{gcd}\left(p, d_f\right)=1.$

For the divisor $\operatorname{div} \Delta_g$ we have the equalities
$$
\begin{aligned}
\operatorname{div} \Delta_g & =\left(\Lambda_p-1\right) \operatorname{div} \Delta_f=\left(\Lambda_p-1\right)\left((-1)^n+\sum a_j \Lambda_j\right) \\
& =\sum_j \operatorname{gcd}(p, j) a_j \Lambda_{\operatorname{lcm}(p, j)}-\sum_j a_j \Lambda_j+(-1)^n \Lambda_p+(-1)^{n+1} .
\end{aligned}
$$
Since the $j$ 's run through all the least common multiples of the set $\left\{u_1, \ldots, u_n\right\}$ and $\operatorname{gcd}\left(p, u_i\right)=1$ for all $i$, we note  that for all $j, \operatorname{gcd}(p, j)=1$. Thus
$$
b_{n-1}\left(L_g\right)=\sum_j a_j-\sum_j a_j+(-1)^n+(-1)^{n+1}=0.
$$

So $L_g$ is a rational homology sphere. Now, the  computation of the Alexander polynomial for $L_g$ leads to:
\begin{align}
\Delta_g(t) & =(t-1)^{(-1)^{n+1}}\left(t^p-1\right)^{(-1)^n} \prod_j\left(t^{p j}-1\right)^{a_j}\left(t^j-1\right)^{-a_j} \nonumber\\
& =\left(t^{p-1}+\cdots+t+1\right)^{(-1)^n} \prod_j\left(\frac{t^{p j-1}+\cdots+t+1}{t^{j-1}+\cdots+t+1}\right)^{a_j} \nonumber\\ 
&=\left(t^{p-1}+\cdots+t+1\right)^{(-1)^n} \prod_j\left(t^{(p-1) j}+\cdots+t^j+1\right)^{a_j}. \label{eq:test}
\end{align}
This gives
$$
\Delta_g(1)=p^{\Sigma_j a_j+(-1)^n}.$$ 
Thus from Equation (5) it follows that $L_g$ is a homotopy sphere if and only if $L_f$ is a rational homology sphere.

In \cite{BGN3}, the authors made use of  Equation (10) and apply  Theorem 2.2  to determine the diffeomorphism type for $L_g$   for  7-dimensional links $L_f$ coming from their list of rational homology 7-spheres \cite{BGN2}. If  $L_f$ has {\it odd degree} they proved.  

\begin{theorem}[Boyer et al. \cite{BGN2}] Let $L_g$ be the link of a $p$-branched cover of $S^{2n+1}$ branched over a link 
 $L_f$   in their list of 184  rational homology spheres given in \cite{BGN2}. Suppose that degree $d_f$ of $L_f$ is odd and that  $\operatorname{gcd}(p, d_f)=1.$ Then, for  $p$  odd, $L_g$ is diffeomorphic to the standard 9-sphere $S^9.$ For  $p$  even,  $L_g$ is diffeomorphic to $S^9$ if $|H_3(L_f, \mathbb Z)|\equiv \pm 1 (\bmod 8)$ and $L_g$ is diffeomorphic to the Kervaire exotic sphere $\Sigma_9$ if $|H_3(L_f, \mathbb Z)|\equiv \pm 3 (\bmod 8).$
\end{theorem}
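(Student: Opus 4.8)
The plan is to reduce everything to Levine's criterion (Theorem 2.2) by computing $\Delta_g(-1)\pmod 8$, using that the excerpt has already shown $L_g$ is a homotopy $9$-sphere (it is $9$-dimensional and $\Delta_g(1)=\pm 1$ because $L_f$ is a rational homology sphere). The starting point is the factored expression for the Alexander polynomial in Equation (10), which I would first repackage as the clean identity
$$\Delta_g(t)=\frac{\Delta_f(t^p)}{\Delta_f(t)},$$
obtained by recognizing $t^{p-1}+\cdots+1=(t^p-1)/(t-1)$ and $t^{(p-1)j}+\cdots+1=(t^{pj}-1)/(t^j-1)$ and comparing with $\Delta_f(t)=(t-1)^{(-1)^n}\prod_j(t^j-1)^{a_j}$. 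Since $\Delta_g$ is genuinely a polynomial, this identity lets me evaluate at $t=-1$ as soon as I know the denominator does not vanish there.

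The first real step is to show $\Delta_f(-1)\neq 0$, and in fact $\Delta_f(-1)=1$, and this is where the hypothesis that $d_f$ is odd enters. The monodromy $h_*$ of a weighted homogeneous polynomial has finite order dividing $d_f$, so every eigenvalue is a $d_f$-th root of unity; as $\Delta_f$ is a monic integer polynomial whose roots are roots of unity, Kronecker's theorem writes it as a product of cyclotomic polynomials $\Delta_f(t)=\prod_{j\mid d_f}\Phi_j(t)^{m_j}$. Because $L_f$ is a rational homology sphere we have $m_1=0$ (the eigenvalue $1$ does not occur, i.e.\ $\Delta_f(1)\neq 0$), and because $d_f$ is odd every $j$ with $m_j>0$ is an odd integer $>1$. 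Using $\Phi_j(-1)=\Phi_{2j}(1)=1$ for odd $j>1$ (since $2j$ is not a prime power), I conclude $\Delta_f(-1)=\prod_j\Phi_j(-1)^{m_j}=1$; in particular $-1$ is not an eigenvalue of $h_*$, so the denominator above is nonzero at $t=-1$.

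With $\Delta_f(-1)=1$ in hand the evaluation splits by the parity of $p$. For $p$ odd, $(-1)^p=-1$ gives $\Delta_g(-1)=\Delta_f(-1)/\Delta_f(-1)=1\equiv 1\pmod 8$, so Levine's criterion yields the standard sphere $S^9$. For $p$ even, $(-1)^p=1$ gives $\Delta_g(-1)=\Delta_f(1)/\Delta_f(-1)=\Delta_f(1)$, and by the standard fact that $|\Delta_f(1)|$ equals the order of $H_3(L_f,\mathbb{Z})$ we obtain $\Delta_g(-1)\equiv\pm|H_3(L_f,\mathbb{Z})|\pmod 8$. Feeding this into Theorem 2.2 gives $S^9$ when $|H_3(L_f,\mathbb{Z})|\equiv\pm 1$ and the Kervaire sphere $\Sigma_9$ when $|H_3(L_f,\mathbb{Z})|\equiv\pm 3\pmod 8$, exactly as claimed.

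I expect the main obstacle to be the identity $\Delta_f(-1)=1$: one must justify that the monodromy has odd order (so that $-1$ is excluded as an eigenvalue) and then assemble the cyclotomic bookkeeping, in particular the evaluations $\Phi_j(-1)=1$ for odd $j>1$. Everything else is formal manipulation of Equation (10) together with a direct appeal to Levine's theorem; the two hypotheses are used precisely where needed, with ``$d_f$ odd'' guaranteeing that $\Delta_f(-1)$ is a nonzero unit and ``$\gcd(p,d_f)=1$'' guaranteeing that the branched-cover formula \eqref{eq:test} is available.
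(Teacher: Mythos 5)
Your argument is correct and reaches the stated conclusion; note that the paper itself does not prove Theorem 3.1 but quotes it from the literature, so the only in-paper point of comparison is the surrounding machinery and the proof of the companion Theorem 3.2, which evaluates the product formula in Equation (10) at $t=-1$ factor by factor. That is where your route genuinely differs. The paper's method sorts the factors of Equation (10) by the parity of $j$; for $d_f$ odd and $p$ even every factor $t^{(p-1)j}+\cdots+t^j+1$ vanishes at $t=-1$ while the exponents $a_j$ have mixed signs, so a naive factor-by-factor evaluation gives an indeterminate expression and one must group the zeros (equivalently, take a limit extracting $\Delta_g(-1)=\prod_j j^{a_j}=\Delta_f(1)$). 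Your repackaging of Equation (10) as $\Delta_g(t)=\Delta_f(t^p)/\Delta_f(t)$, combined with the cyclotomic lemma $\Delta_f(-1)=1$ (justified exactly as you say: the monodromy has order dividing the odd number $d_f$, the eigenvalue $1$ is excluded because $L_f$ is a rational homology sphere, and $\Phi_j(-1)=\Phi_{2j}(1)=1$ for odd $j>1$), sidesteps that indeterminacy entirely and turns both parities of $p$ into one-line evaluations, with $\Delta_g(-1)=\Delta_f(1)=\pm|H_3(L_f,\mathbb Z)|$ in the even case. What the paper's divisor-calculus route buys instead is the explicit expression of $\Delta_g(-1)$ in terms of the coefficients $a_j$, which is what gets reused in the even-degree situation of Theorem 3.2; what your route buys is robustness and a cleaner conceptual identification. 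The two hypotheses enter exactly where you place them: $\gcd(p,d_f)=1$ to obtain Equation (10), and $d_f$ odd to force $\Delta_f(-1)$ to be the unit $1$.
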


The next theorem is an update of Theorems 7.6 and Theorems 7.7 in \cite{BGN3} and its  proof is similar to the one given by Boyer et al.

\begin{theorem}  Let us assume for  $\operatorname{gcd}(p, d_f)=1$, then 
 for each  member of the 52 rational homology spheres in our list given in Table 1, $L_g$ is a homotopy sphere admitting positive Ricci curvature. 
If $L_f$ is taken to be one of the 49 rational homology spheres with odd degree listed in Table 1, then $L_g$ admits Sasakian metrics with positive Ricci curvature and these are  diffeomorphic to the standard sphere $S^9.$ For the remaining three members with even degree  in Table 1,  we have 
\begin{enumerate}
\item For $L_f$ with ${\bf w}=(118, 118, 185, 135, 35)$ with  degree $d_f=590,$ $L_g$ diffeomorphic to the standard $S^9$ if $p^4\equiv \pm 1 (\bmod 8)$ and diffeomorphic to the exotic Kervaire 9-sphere $\Sigma_9$ if $p^4\equiv \pm 3 (\bmod 8).$

\item For $L_f$ with ${\bf w}=(64, 512, 475, 375, 175)$ with  degree $d_f=1600,$ $L_g$ diffeomorphic to the standard $S^9$ if $p^3\equiv \pm 1 (\bmod 8)$ and diffeomorphic to the exotic Kervaire 9-sphere $\Sigma_9$ if $p^3\equiv \pm 3 (\bmod 8).$

\item For $L_f$ with ${\bf w}=(3532, 7064, 5355, 115, 1595)$ with degree $d_f=17660,$ $L_g$ diffeomorphic to the standard $S^9$ if $p^2\equiv \pm 1 (\bmod 8)$ and diffeomorphic to the exotic Kervaire 9-sphere $\Sigma_9$ if $p^2\equiv \pm 3 (\bmod 8).$
\end{enumerate}
\end{theorem}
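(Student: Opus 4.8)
The plan is to separate the two claims. The positive-Ricci and homotopy-sphere assertions hold uniformly over Table 1, so I would dispatch them first; the diffeomorphism type I would pin down by computing $\Delta_g(-1)$ modulo $8$ and feeding it into Levine's criterion (Theorem 2.2), which applies because $L_g$ is $9$-dimensional, i.e.\ a $(2N-1)$-sphere with $N=5$ odd. For the uniform claims, each $L_f$ in Table 1 is Sasaki-Einstein, so $\mathcal Z_f$ is Fano and $|\mathbf w_f|-d_f>0$; Equation (9) then gives $|\mathbf w_g|-d_g>0$ for all $p>1$, and Equation (1) together with Theorem 2.1 yields positive Ricci curvature on $L_g$. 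That $L_g$ is a homotopy sphere is the branched-cover computation recalled just above: $\gcd(p,d_f)=1$ forces $\gcd(p,j)=1$ for every index $j$, whence the middle Betti number of $L_g$ vanishes and $\Delta_g(1)=1$. This settles the first assertion for all $52$ links.

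The core of the argument is the evaluation of the Alexander polynomial (10) at $t=-1$. Writing $\phi_p(t)=(t^p-1)/(t-1)=1+t+\dots+t^{p-1}$, formula (10) reads $\Delta_g(t)=\phi_p(t)\prod_j\phi_p(t^j)^{a_j}$. The decisive structural remark is that every exponent $j$ appearing here is a least common multiple of the integers $u_i=d_f/\gcd(d_f,w_i)$, each of which divides $d_f$; hence all the $j$ inherit the parity of $d_f$---in particular they are all odd when $d_f$ is odd. Evaluating factor by factor, $\phi_p(t^j)|_{t=-1}=p$ for $j$ even (its $p$ summands all equal $1$), while $\phi_p(t^j)|_{t=-1}=\phi_p(-1)$ for $j$ odd, with $\phi_p(-1)=1$ if $p$ is odd and $\phi_p(-1)=0$ if $p$ is even. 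This is the mechanism coupling the parities of $p$ and $d_f$ to the residue $\Delta_g(-1)$, and it organizes the whole computation.

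I would then split on the degree. For the $49$ odd-degree links all $j$ are odd: if $p$ is odd then $\Delta_g(-1)=\phi_p(-1)\cdot\prod_j 1=1\equiv 1\pmod 8$, so Theorem 2.2 gives the standard $S^9$; if $p$ is even every factor acquires a simple zero at $t=-1$, but since $\sum_j a_j=-1$ for a rational homology sphere (as in the computation $\Delta_g(1)=p^{\sum_j a_j+1}=1$ above) these orders of vanishing cancel, and the limit evaluates to $\Delta_g(-1)=\pm\prod_j j^{a_j}=\pm\Delta_f(1)=\pm|H_3(L_f,\mathbb Z)|$; a check of the orders recorded in Table 1 confirms $|H_3(L_f,\mathbb Z)|\equiv\pm1\pmod 8$ for these $49$, so $L_g\cong S^9$ in all cases.

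For the three even-degree links the standing hypothesis $\gcd(p,d_f)=1$ with $d_f$ even forces $p$ to be odd, so $\phi_p(-1)=1$ and only the even indices contribute: $\Delta_g(-1)=p^{A_e}$ with $A_e=\sum_{j\text{ even}}a_j$, which is exactly the multiplicity of $t=-1$ as a root of $\Delta_f$ (equivalently of $-1$ as an eigenvalue of the monodromy $h_*$). Expanding $\operatorname{div}\Delta_f$ through (2)--(4) for the three weight vectors gives $A_e=4,3,2$ respectively, and Theorem 2.2 then converts $\Delta_g(-1)=p^4,p^3,p^2$ into the three stated dichotomies. The hard part will be precisely this last bookkeeping---collating the even-index coefficients $a_j$ of the divisor for such large weights, which is what the Matlab routine automates---together with the finite mod-$8$ verification for the $49$ odd-degree links; everything else is the parity accounting on the factors of (10).
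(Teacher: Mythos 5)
Your proposal is correct and follows essentially the same route as the paper: evaluate $\Delta_g(-1)$ from formula (10) by sorting the indices $j$ by parity, observe that only even $j$ contribute a factor of $p$ when $p$ is odd, and feed the result into Levine's criterion (Theorem 2.2). The only cosmetic differences are that the paper simply cites Theorem 3.1 for the 49 odd-degree links (all of which have $|H_3|\equiv 1\pmod 8$) rather than re-deriving the $p$-even limit, and for the three even-degree links it extracts the exponent $\sum_{j\,\mathrm{even}}a_j=n(\mathbf w)+1$ from the structural identity $\Delta_f(1)=m_3^{\,n(\mathbf w)+1}$ and the torsion column of Table 1, instead of expanding $\operatorname{div}\Delta_f$ from scratch.
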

\begin{proof}
Our list of rational homology 7-spheres admitting Sasaki-Einstein metrics contains 49 elements with odd degree and all of these have order $\left|H_{3}\right| \equiv 1 (\bmod 8),$ thus, the first part of the theorem follows from Theorem 3.1. 
For the three members with even degree in Table 1, the weight vectors  can be written as  
$\textbf{w}=(w_{0},w_{1},w_{2},w_{3},w_{4})=(m_{2}v_{0},m_{2}v_{1},m_{2}v_{2},m_{3}v_{3},m_{3}v_{4})$ 
where $\gcd(m_{2},m_{3})=1$ and $m_{2}m_{3}=d$ where  $m_2$ is odd and $m_3$ is even. For these one obtains 
\begin{equation}
\operatorname{div}\Delta_f=n({\bf w})\Lambda_d+\Lambda_{m_3}-n({\bf w})\Lambda_{m_2}-1,
\end{equation}
 where $n({\bf w})$ is a positive integer.  It follows from Equation (4) that $\sum_{j\, even}a_j=n({\bf w})+1$. 

On the other hand, bearing in mind Theorem 2.2, we pay attention to $\Delta_g(-1):$  since $p$ is odd and $d$ even,  from Equation (10) we have 

$$\Delta_g(-1)=\prod_{j \text { even }} p^{a_j}=p^{\Sigma_{j \text { even }}^{a_j}}$$

Since $L_f$ is a rational homology sphere $H_3(L_f,\mathbb Z)_{tor}=\Delta(1),$ so in order to compute $\Delta(1)$ we rewrite the equation (11) in terms of the Alexander polynomial:  
$$
\begin{aligned}
\Delta(t) &=& (t^d-1)^{n({\bf w})}(t^{m_3}-1)(t^{m_2}-1)^{-n({\bf w})}(t-1)^{-1}\\
&=& (t^{d-1}+\ldots 1)^{n({\bf w})}(t^{m_3-1}+\ldots 1)(t^{m_2-1}+\ldots 1)^{-n({\bf w})}, 
\end{aligned}
$$
Thus $\Delta(1)=dm_3m_2^{-n({\bf w})}=m_3^{n({\bf w})+1}.$ The result follows by checking the torsion of $H_3(L_f, \mathbb Z)$ at Table 1 below.

\end{proof}

\subsection{\textit{Rational homology 7-spheres admitting Sasaki-Einstein structures from Johnson-Koll\'ar list}}

In  \cite{JK} Johnson and Kollár gave a list of 4442 of well-formed quasismooth $\mathbb{Q}$-Fano 3-folds of index one anti-canonically embedded in $\mathbb{C P}^{4}(\mathbf{w}).$ This list contains  1936 3-folds that admit a Kähler-Einstein metric. Theorem 2.1 implies that the corresponding link  admits Sasaki-Einstein metrics.  These links were studied in \cite{BGN2} and they  found 184 rational homology 7-spheres.  In the following table, we present 52 new examples of 2-connected rational homology spheres admitting Sasaki-Einstein metrics. 
\medskip

\begin{center}
\noindent{\bf Table 1: Rational homology 7-spheres admitting S-E structures} 
\end{center}
\medskip
{\Small{
\begin{longtable}{| c | c | c | c | c | c | } \hline

${\bf{w}}=(w_0,w_1,w_2, w_3, w_4)$ & Polynomial & Type &   $d$   &  $\mu$  & $H_{3}(M,\ZZ)$   \\ \hline \hline \endfirsthead

\hline
 ${\bf{w}}=(w_0,w_1,w_2, w_3, w_4)$ & Polynomial & Type & $d$   &  $\mu$  & $H_{3}(M,\ZZ)$  \\ \hline \hline \endhead

(13,143,775,620,465)  & $z_{0}^{155}+z_{0}z_{1}^{14}+z_{4}z_{2}^{2}+z_{2}z_{3}^{2}+z_{3}z_{4}^{3}$ & \tiny{Chain + Cycle} & 2015 & 24192 & $(\ZZ_{13})^{14}$   \\  \hline

(77,77,333,180,27)  & $z_{0}^{9}+z_{1}^{9}+z_{4}z_{2}^{2}+z_{2}z_{3}^{2}+z_{3}z_{4}^{19}$ & \tiny{BP + Cycle} & 693 & 4864 & $(\ZZ_{77})^{8}$   \\  \hline

(67,67,161,28,147)  & $z_{0}^{7}+z_{1}^{7}+z_{4}z_{2}^{2}+z_{2}z_{3}^{11}+z_{3}z_{4}^{3}$ & \tiny{BP + Cycle} & 469 & 2376 & $(\ZZ_{67})^{6}$   \\  \hline

(29,667,1807,1112,417)  & $z_{0}^{139}+z_{0}z_{1}^{6}+z_{4}z_{2}^{2}+z_{2}z_{3}^{2}+z_{3}z_{4}^{7}$ & \tiny{Chain + Cycle} & 4031 & 19488 & $(\ZZ_{29})^{6}$   \\  \hline

(493,34,1841,1315,789)  & $z_{1}z_{0}^{9}+z_{0}z_{1}^{117}+z_{4}z_{2}^{2}+z_{2}z_{3}^{2}+z_{3}z_{4}^{4}$ & \tiny{Cycle + Cycle} & 4471 & 16848 & $(\ZZ_{17})^{5}$   \\  \hline

(67,67,217,84,35)  & $z_{0}^{7}+z_{1}^{7}+z_{4}z_{2}^{2}+z_{2}z_{3}^{3}+z_{3}z_{4}^{11}$ & \tiny{BP + Cycle} & 469 & 2376 & $(\ZZ_{67})^{6}$   \\  \hline

(118,118,185,135,35)  & $z_{0}^{5}+z_{1}^{5}+z_{4}z_{2}^{3}+z_{2}z_{3}^{3}+z_{3}z_{4}^{13}$ & \tiny{BP + Cycle} & 590 & 1872 & $(\ZZ_{118})^{4}$   \\  \hline

(373,373,780,35,305)  & $z_{0}^{5}+z_{1}^{5}+z_{4}z_{2}^{2}+z_{2}z_{3}^{31}+z_{3}z_{4}^{6}$ & \tiny{BP + Cycle} & 1865 & 5952 & $(\ZZ_{373})^{4}$   \\  \hline

(113,226,715,377,39)  & $z_{0}^{13}+z_{0}z_{1}^{6}+z_{4}z_{2}^{2}+z_{2}z_{3}^{2}+z_{3}z_{4}^{28}$ & \tiny{Chain + Cycle} & 1469 & 7392 & $(\ZZ_{113})^{6}$   \\  \hline

(253,253,545,40,175)  & $z_{0}^{5}+z_{1}^{5}+z_{4}z_{2}^{2}+z_{2}z_{3}^{18}+z_{3}z_{4}^{7}$ & \tiny{BP + Cycle} & 1265 & 4032 & $(\ZZ_{253})^{4}$   \\  \hline

(43,1333,1875,500,1625)  & $z_{0}^{125}+z_{0}z_{1}^{4}+z_{4}z_{2}^{2}+z_{2}z_{3}^{7}+z_{3}z_{4}^{3}$ & \tiny{Chain + Cycle} & 5375 & 15792 & $(\ZZ_{43})^{4}$   \\  \hline

(43,1333,2375,1000,625)  & $z_{0}^{125}+z_{0}z_{1}^{4}+z_{4}z_{2}^{2}+z_{2}z_{3}^{3}+z_{3}z_{4}^{7}$ &  \tiny{Chain + Cycle} & 5375 & 15792 & $(\ZZ_{43})^{4}$   \\  \hline

(73,73,95,45,80)  & $z_{0}^{5}+z_{1}^{5}+z_{4}z_{2}^{3}+z_{2}z_{3}^{6}+z_{3}z_{4}^{4}$ & \tiny{BP + Cycle} & 365 & 1152 & $(\ZZ_{73})^{4}$   \\  \hline

(185,740,1911,987,63)  & $z_{0}^{21}+z_{0}z_{1}^{5}+z_{4}z_{2}^{2}+z_{2}z_{3}^{2}+z_{3}z_{4}^{46}$ & \tiny{Chain + Cycle} & 3885 & 15640 & $(\ZZ_{185})^{5}$   \\  \hline

(929,1858,2849,63,805)  & $z_{0}^{7}+z_{0}z_{1}^{3}+z_{4}z_{2}^{2}+z_{2}z_{3}^{58}+z_{3}z_{4}^{8}$ & \tiny{Chain + Cycle} & 6503 & 13920 & $(\ZZ_{929})^{3}$   \\  \hline

(64,512,475,375,175)  & $z_{0}^{25}+z_{0}z_{1}^{3}+z_{4}z_{2}^{3}+z_{2}z_{3}^{3}+z_{3}z_{4}^{7}$ & \tiny{Chain + Cycle} & 1600 & 3213 & $(\ZZ_{64})^{3}$   \\  \hline

(253,253,600,95,65)  & $z_{0}^{5}+z_{1}^{5}+z_{4}z_{2}^{2}+z_{2}z_{3}^{7}+z_{3}z_{4}^{18}$ & \tiny{BP + Cycle} & 1265 & 4032 & $(\ZZ_{253})^{4}$   \\  \hline

(127,381,793,286,65)  & $z_{0}^{13}+z_{0}z_{1}^{4}+z_{4}z_{2}^{2}+z_{2}z_{3}^{3}+z_{3}z_{4}^{21}$ & \tiny{Chain + Cycle} & 1651 & 5040 & $(\ZZ_{127})^{4}$   \\  \hline

(65,650,1581,867,153)  & $z_{0}^{51}+z_{0}z_{1}^{5}+z_{4}z_{2}^{2}+z_{2}z_{3}^{2}+z_{3}z_{4}^{16}$ & \tiny{Chain + Cycle} & 3315 & 13120 & $(\ZZ_{65})^{5}$   \\  \hline

(231,66,481,185,259)  & $z_{1}z_{0}^{5}+z_{0}z_{1}^{15}+z_{4}z_{2}^{2}+z_{2}z_{3}^{4}+z_{3}z_{4}^{4}$ & \tiny{Cycle + Cycle} & 1221 & 2400 & $(\ZZ_{33})^{3}$   \\  \hline

(1003,68,3745,2675,1605)  & $z_{1}z_{0}^{9}+z_{0}z_{1}^{119}+z_{4}z_{2}^{2}+z_{2}z_{3}^{2}+z_{3}z_{4}^{4}$ & \tiny{Cycle + Cycle} & 9095 & 17136 & $(\ZZ_{17})^{3}$   \\  \hline

(73,584,1435,779,123)  & $z_{0}^{41}+z_{0}z_{1}^{5}+z_{4}z_{2}^{2}+z_{2}z_{3}^{2}+z_{3}z_{4}^{18}$ & \tiny{Chain + Cycle} & 2993 & 11880 & $(\ZZ_{73})^{5}$   \\  \hline

(481,962,1519,77,329)  & $z_{0}^{7}+z_{0}z_{1}^{3}+z_{4}z_{2}^{2}+z_{2}z_{3}^{24}+z_{3}z_{4}^{10}$ & \tiny{Chain + Cycle} & 3367 & 7200 & $(\ZZ_{481})^{3}$   \\  \hline

(657,3942,4693,95,3097)  & $z_{0}^{19}+z_{0}z_{1}^{3}+z_{4}z_{2}^{2}+z_{2}z_{3}^{82}+z_{3}z_{4}^{4}$ & \tiny{Chain + Cycle} & 12483 & 25584 & $(\ZZ_{657})^{3}$   \\  \hline

(2628,1971,4693,95,3097)  & $z_{1}z_{0}^{4}+z_{0}z_{1}^{5}+z_{4}z_{2}^{2}+z_{2}z_{3}^{82}+z_{3}z_{4}^{4}$ & \tiny{Cycle + Cycle} & 12483 & 13120 & $(\ZZ_{657})^{2}$   \\  \hline

(3773,98,8901,5031,1161)  & $z_{1}z_{0}^{5}+z_{0}z_{1}^{155}+z_{4}z_{2}^{2}+z_{2}z_{3}^{2}+z_{3}z_{4}^{12}$ & \tiny{Cycle + Cycle} & 18963 & 37200 & $(\ZZ_{49})^{3}$   \\  \hline

(2069,2069,1413,102,555)  & $z_{0}^{3}+z_{1}^{3}+z_{4}z_{2}^{4}+z_{2}z_{3}^{47}+z_{3}z_{4}^{11}$ & \tiny{BP + Cycle} & 6207 & 8272 & $(\ZZ_{2069})^{2}$   \\  \hline

(929,1858,3199,413,105)  & $z_{0}^{7}+z_{0}z_{1}^{3}+z_{4}z_{2}^{2}+z_{2}z_{3}^{8}+z_{3}z_{4}^{58}$ & \tiny{Chain + Cycle} & 6503 & 13920 & $(\ZZ_{929})^{3}$   \\  \hline

(3532,7064,5355,115,1595)  & $z_{0}^{5}+z_{0}z_{1}^{2}+z_{4}z_{2}^{3}+z_{2}z_{3}^{107}+z_{3}z_{4}^{11}$ & \tiny{Chain + Cycle}  & 17660 & 21186 & $(\ZZ_{3532})^{2}$   \\  \hline

(1505,6020,3357,2547,117)  & $z_{0}^{9}+z_{0}z_{1}^{2}+z_{4}z_{2}^{4}+z_{2}z_{3}^{4}+z_{3}z_{4}^{94}$ & \tiny{Chain + Cycle} & 13545 & 15040 & $(\ZZ_{1505})^{2}$   \\  \hline

(136,119,889,635,381)  & $z_{1}z_{0}^{15}+z_{0}z_{1}^{17}+z_{4}z_{2}^{2}+z_{2}z_{3}^{2}+z_{3}z_{4}^{4}$ & \tiny{Cycle + Cycle} & 2159 & 4080 & $(\ZZ_{17})^{3}$   \\  \hline

(1297,3891,2653,119,1120)  & $z_{0}^{7}+z_{0}z_{1}^{2}+z_{4}z_{2}^{3}+z_{2}z_{3}^{54}+z_{3}z_{4}^{8}$ & \tiny{Chain + Cycle} & 9079 & 10368 & $(\ZZ_{1297})^{2}$   \\  \hline

(6485,2594,9197,119,3655)  & $z_{1}z_{0}^{3}+z_{0}z_{1}^{6}+z_{4}z_{2}^{2}+z_{2}z_{3}^{108}+z_{3}z_{4}^{6}$ & \tiny{Cycle + Cycle} & 22049 & 23328 & $(\ZZ_{1297})^{2}$   \\  \hline

(1457,1457,1011,120,327)  & $z_{0}^{3}+z_{1}^{3}+z_{4}z_{2}^{4}+z_{2}z_{3}^{28}+z_{3}z_{4}^{13}$ & \tiny{BP + Cycle} & 4371 & 5824 & $(\ZZ_{1457})^{2}$   \\  \hline

(701,701,381,123,198)  & $z_{0}^{3}+z_{1}^{3}+z_{4}z_{2}^{5}+z_{2}z_{3}^{14}+z_{3}z_{4}^{10}$ & \tiny{BP + Cycle} & 2103 & 2800 & $(\ZZ_{701})^{2}$   \\  \hline

(2069,2069,1521,426,123)  & $z_{0}^{3}+z_{1}^{3}+z_{4}z_{2}^{4}+z_{2}z_{3}^{11}+z_{3}z_{4}^{47}$ & \tiny{BP + Cycle} & 6207 & 8272 & $(\ZZ_{2069})^{2}$   \\  \hline

(2149,921,3193,124,3131)  & $z_{1}z_{0}^{4}+z_{0}z_{1}^{8}+z_{4}z_{2}^{2}+z_{2}z_{3}^{51}+z_{3}z_{4}^{3}$ & \tiny{Cycle + Cycle} & 9517 & 9792 & $(\ZZ_{307})^{2}$   \\  \hline

(289,2312,2725,125,1775)  & $z_{0}^{25}+z_{0}z_{1}^{3}+z_{4}z_{2}^{2}+z_{2}z_{3}^{36}+z_{3}z_{4}^{4}$ &  \tiny{Chain + Cycle} & 7225 & 14688 & $(\ZZ_{289})^{3}$   \\  \hline

(129,3612,4165,425,2635)  & $z_{0}^{85}+z_{0}z_{1}^{3}+z_{4}z_{2}^{2}+z_{2}z_{3}^{16}+z_{3}z_{4}^{4}$ & \tiny{Chain + Cycle}& 10965 & 21888 & $(\ZZ_{129})^{3}$   \\  \hline

(129,3612,5185,1445,595)  & $z_{0}^{85}+z_{0}z_{1}^{3}+z_{4}z_{2}^{2}+z_{2}z_{3}^{4}+z_{3}z_{4}^{16}$ & \tiny{Chain + Cycle} & 10965 & 21888 & $(\ZZ_{129})^{3}$   \\  \hline

(4085,129,5745,1532,4979)  & $z_{1}z_{0}^{4}+z_{0}z_{1}^{96}+z_{4}z_{2}^{2}+z_{2}z_{3}^{7}+z_{3}z_{4}^{3}$ & \tiny{Cycle + Cycle} & 16469 & 16128 & $(\ZZ_{43})^{2}$   \\  \hline

(4085,129,7277,3064,1915)  & $z_{1}z_{0}^{4}+z_{0}z_{1}^{96}+z_{4}z_{2}^{2}+z_{2}z_{3}^{3}+z_{3}z_{4}^{7}$ & \tiny{Cycle + Cycle} & 16469 & 16128 & $(\ZZ_{43})^{2}$   \\  \hline

(481,962,1617,175,133)  & $z_{0}^{7}+z_{0}z_{1}^{3}+z_{4}z_{2}^{2}+z_{2}z_{3}^{10}+z_{3}z_{4}^{24}$ & \tiny{Chain + Cycle} & 3367 & 7200 & $(\ZZ_{481})^{3}$   \\  \hline
 
(657,3942,6175,1577,133)  & $z_{0}^{19}+z_{0}z_{1}^{3}+z_{4}z_{2}^{2}+z_{2}z_{3}^{4}+z_{3}z_{4}^{82}$ & \tiny{Chain + Cycle} & 12483 & 25584 & $(\ZZ_{657})^{3}$   \\  \hline 

(2628,1971,6175,1577,133)  & $z_{1}z_{0}^{4}+z_{0}z_{1}^{5}+z_{4}z_{2}^{2}+z_{2}z_{3}^{4}+z_{3}z_{4}^{82}$ & \tiny{Cycle + Cycle} & 12483 & 13120 & $(\ZZ_{657})^{2}$   \\  \hline

(1945,477,1321,148,1871)  & $z_{4}z_{0}^{2}+z_{0}z_{1}^{8}+z_{1}z_{2}^{4}+z_{2}z_{3}^{30}+z_{3}z_{4}^{3}$ & \tiny{Cycle} & 5761 & 5760 & $\ZZ_{5761}$   \\  \hline

(2387,1579,661,148,771)  & $z_{4}z_{0}^{2}+z_{0}z_{1}^{2}+z_{1}z_{2}^{6}+z_{2}z_{3}^{33}+z_{3}z_{4}^{7}$ & \tiny{Cycle} & 5545 & 5544 & $\ZZ_{5545}$   \\  \hline

(9142,3097,1917,4129,149)  & $z_{4}z_{0}^{2}+z_{0}z_{1}^{3}+z_{1}z_{2}^{8}+z_{2}z_{3}^{4}+z_{3}z_{4}^{96}$ & \tiny{Cycle} & 18433 & 18432 & $\ZZ_{18433}$   \\  \hline

(2323,1611,562,151,899)  & $z_{4}z_{0}^{2}+z_{0}z_{1}^{2}+z_{1}z_{2}^{7}+z_{2}z_{3}^{33}+z_{3}z_{4}^{6}$ & \tiny{Cycle} & 5545 & 5544 & $\ZZ_{5545}$   \\  \hline

(3073,712,2211,151,1199)  & $z_{4}z_{0}^{2}+z_{0}z_{1}^{6}+z_{1}z_{2}^{3}+z_{2}z_{3}^{34}+z_{3}z_{4}^{6}$ & \tiny{Cycle} & 7345 & 7344 & $\ZZ_{7345}$   \\  \hline

(1585,189,1105,292,1439)  & $z_{4}z_{0}^{2}+z_{0}z_{1}^{16}+z_{1}z_{2}^{4}+z_{2}z_{3}^{12}+z_{3}z_{4}^{3}$ & \tiny{Cycle} & 4609 & 4608 & $\ZZ_{4609}$   \\  \hline

(18277,6172,10207,1899,239)  & $z_{4}z_{0}^{2}+z_{0}z_{1}^{3}+z_{1}z_{2}^{3}+z_{2}z_{3}^{14}+z_{3}z_{4}^{146}$ & \tiny{Cycle} & 36793 & 36792 & $\ZZ_{36793}$   \\  \hline

\end{longtable}
}}

All the rational homology 7-spheres that we exhibit in Table 1 come from polynomials that are of cycle type or that have that type of singularity as part of its Thom-Sebastiani representation. The examples found in \cite{BGN2} also have that particular feature. Actually, in \cite{BGN2}, Lemma 3.3  provides examples of cycle type singularities,  while Lemma 3.10 provides examples of cycle+cycle type singularities.  Indeed,  if we can write the vector 
$\textbf{w}=(w_{0},w_{1},w_{2},w_{3},w_{4})=(m_{2}v_{0},m_{2}v_{1},m_{2}v_{2},m_{3}v_{3},m_{3}v_{4})$ 
where $\gcd(m_{2},m_{3})=1$ and $m_{2}m_{3}=d$, then Lemma 3.10 in \cite{BGN2} implies that
the  number $$\displaystyle{n(\textbf{w})=\dfrac{m_{2}}{v_{3}v_{4}}-\dfrac{1}{v_{3}}-\dfrac{1}{v_{4}}}$$ is a positive integer.
This is equivalent to $$m_{2}=v_{3}+v_{4}(1+n(\textbf{w})v_{3}).$$
Multiplying by $m_{3}$, we have 
\begin{equation}
d=w_{3}+w_{4}a_{4}
\end{equation}
where $a_{4}>1$ is integer. Analogously, we get 
$m_{2}=v_{4}+v_{3}(1+n(\textbf{w})v_{4}).$ 
Multiplying by $m_{2}$, we obtain
\begin{equation}
d=w_{4}+w_{3}a_{3}
\end{equation}
From (9) and (10) it follows that there exist a cycle block
$z_{4}z_{3}^{a_{3}}+z_{3}z_{4}^{a_{4}}$ that is a summand of some invertible polynomial associated to $\textbf{w}$.
We conjecture that all rational homology 7-spheres arising as links of  hypersurface singularities can be given by  polynomials that  must contain a cycle singularity term in its representation as a Thom-Sebastiani sum, at least when  $d=\sum_{i}^4w_i+1.$

\begin{remark} 
In \cite{BBG}, from links  $L_f'$ of quasi-smooth weighted hypersurfaces $f'(z_2\ldots z_n)$, the authors  consider the  quasi-smooth weighted hypersurfaces 
$$f(z_0, \ldots z_n)=z_0^2+z_1^2+f'$$ in $\mathbb C^{n+1}.$  It is not difficult to show that the corresponding link $L_f$ admits Sasakian structures with positive Ricci curvature. Additionally, if  the link $L_f'$ is a rational homology sphere,   from Sebastiani-Thom theorem \cite{ST}   it follows that the link $L_f$ is
a rational homology sphere. Thus, from each of the new  52 rational homology 7-spheres one can produce a rational homology 11-sphere admitting positive Ricci curvature. 
\end{remark}

Following the terminology given in  \cite{BGN2}, a rational homology 7-spheres with the same degree $d$, Milnor number $\mu$ and order of $H_{3}$ is a twin. We  detect ten {\it twins} in Table 1;
except the couple given by the weights  $(2323,1611,562,151,899)$ and $(2387,1579,661,148,771),$ both of them of cycle type, with 
$d=|H_3|=5545$ and $\mu=5544$, the rest of twins  have weight vectors with identical first two components $w_0, w_1.$ As mentioned in \cite{BGN2}, it is tempting to conjecture that twins are homeomorphic or even diffeomorphic links, however we are not able to determine this. In an upcoming article, we study  the behavior of  twins trough an operation associated to the Berglund-Hübsch transpose rule from BHK mirror symmetry \cite{BH}.

\subsection{\textit{Sasaki-Einstein  7-manifolds of the form $\#k(S^3\times S^4)$ from the Johnson-Koll\'ar's list and Cheltsov's list}}

In \cite{BG1} it is proven that a  $2$-connected oriented  7-manifold $M$ that bounds a  parallelizable 8-manifold with  $H_3(M, \mathbb Z)$  torsion free is completely determined up to diffeomorphism by the rank of $H_3(M, \mathbb Z).$ Moreover $M$ is diffeomorphic to $ \# k\left(S^{3} \times S^{4}\right) \# \Sigma^{7}$ for some homotopy sphere $\Sigma^{7} \in b P_{8}$ that bounds a parallizable 8-manifold (one of the 28 possible smooth structures on the oriented 7-sphere). Thus, if the link has no torsion it is homeomorphic to $k \#\left(S^{3} \times S^{4}\right)$ where $k$
 is the rank of the third homology group. In \cite{BBG}, it is showed that,  being of Sasaki type, the link has  $k$  even.
 
 Until now, the only cases where Sasaki-Einstein metrics were known to exist on 7-manifolds of the form $\# k \left(S^{3} \times S^{4}\right)$ are $\#222\left(S^{3} \times S^{4}\right)$ and $\#480\left(S^{3} \times S^{4}\right)$.   We found 124 new examples of 2-connected  Sasaki-Einstein 7-manifolds of the form $\#k(S^3\times S^4).$ Of that lot, 118 come from the list given in \cite{JK}, the other 6 examples are links of quasi-smooth Fano 3-folds coming from Reids list of 95 weighted codimension 1 K3 surfaces. We can state the following result.

 \begin{theorem}
 Sasaki-Einstein metrics  exist on 7-manifolds of the form $\#k\left(S^{3} \times S^{4}\right)$ for 22 different values of $k$, where $k=rank(H_{3}(M,\ZZ))$ is given in Table 2 and Table 3. 
 \hfill$\square$
\end{theorem}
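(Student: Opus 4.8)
The plan is to produce, for each target value of $k$, an explicit weighted homogeneous polynomial whose link is a Sasaki-Einstein 7-manifold homeomorphic to $\#k(S^3\times S^4)$, and then to collect the distinct values of $k$ so realized. First I would invoke Theorem 2.1: every Kähler-Einstein Fano 3-fold $\mathcal Z_f$ appearing in the Johnson-Kollár list and in Cheltsov's list of 95 codimension-one K3 surfaces yields a link $L_f$ carrying a Sasaki-Einstein metric. Existence of the metric is therefore automatic for every candidate, and the entire content of the statement is topological: one must decide which of these links are homeomorphic to a connected sum of copies of $S^3\times S^4$ and determine the resulting ranks $k=\operatorname{rank}H_3$.

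For the topological identification I would use the classification recalled just before the theorem. A 2-connected oriented 7-manifold bounding a parallelizable 8-manifold with torsion-free $H_3$ is determined up to diffeomorphism by $k=\operatorname{rank}H_3$, and is then, after absorbing the possible $bP_8$-summand (which is homeomorphic to $S^7$), homeomorphic to $\#k(S^3\times S^4)$, with $k$ even by the Sasaki condition \cite{BBG}. Since each $L_f$ bounds the parallelizable Milnor fibre, the problem reduces to computing $H_3(L_f,\mathbb Z)$ and retaining exactly those links whose torsion vanishes. To make $H_3$ computable I would restrict attention to weight vectors $\mathbf w=(w_0,\dots,w_4)$ with $d=|\mathbf w|-1$ for which exponents $a_i$ can be chosen so that $f$ has one of the ten admissible forms in the table above (BP, chain, cycle, or an iterated Thom-Sebastiani sum of these). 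For such $f$ Orlik's conjecture holds by the Hertling-Mase theorem \cite{HM}, so the torsion of $H_3$ is given by Equation (7) and its rank by the Milnor-Orlik formula (6).

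The computation then splits into two mechanical steps for each admissible link: solve the linear-arithmetic system in the table to recover the $a_i$ (and hence pin down the singularity type), and feed the data $(\mathbf u,\mathbf v)$ of Equation (3) into formulas (6) and (7) to read off $b_3$ and the torsion subgroup. I would implement both steps as the Matlab routines referenced in the Appendix, run them across the $1936$ Johnson-Kollár links and Cheltsov's list, and keep precisely the entries with vanishing torsion; these populate Table 2 and Table 3. Finally I would extract the set of distinct ranks $k=b_3$ occurring in those two tables and count them, arriving at $22$.

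The main obstacle is not conceptual but one of completeness and reliability of the search. I must be certain that the arithmetic matching of a given $\mathbf w$ to an invertible polynomial is \emph{exhaustive} over the ten admissible types, so that no torsion-free Sasaki-Einstein link is overlooked, and that Orlik's algorithm is executed correctly for the large Milnor numbers occurring here, where the $2^{5}$ subset sums in (6) and the inductively defined integers $c_{i_1,\dots,i_s}$ in (7) are error-prone by hand. Confirming that exactly $22$ distinct even values of $k$ arise, and no more, rests entirely on the correctness and exhaustiveness of this computer-assisted verification.
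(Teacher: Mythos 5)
Your proposal follows the same route as the paper: existence of the Sasaki--Einstein metric comes for free from Theorem 2.1 applied to the K\"ahler--Einstein orbifolds in the Johnson--Koll\'ar and Cheltsov lists, the homeomorphism type is pinned down by the classification of 2-connected 7-manifolds bounding parallelizable 8-manifolds with torsion-free $H_3$, and the torsion and rank of $H_3$ are computed via the Milnor--Orlik formula together with Orlik's conjecture, valid here by Hertling--Mase, implemented by the Matlab codes in the Appendix. The only remark worth adding is that your worry about \emph{exhaustiveness} of the search is not actually needed for the statement as written, since the theorem claims only that 22 values of $k$ are realized, not that these are the only ones; what matters is the correctness of the computations for the entries that do appear in Tables 2 and 3.
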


Notice that in  Table 2, $k$ assumes every even number between 2 and 32 except $k=22$ and $k=28,$ however the elements in Table 3 give sporadic values for $k.$
\newpage

\begin{center}
\noindent{\bf Table 2: SE  7-manifolds of the form $\#k(S^3\times S^4)$ from the Johnson-Koll\'ar list }
\end{center}
\medskip

\begin{center}
\Small{

\begin{longtable}{| c | c | c | c | c | c | } \hline
${\bf{w}}=(w_0,w_1,w_2, w_3, w_4)$ & Polynomial & Type &   $d$   &  $\mu$  & $H_{3}(M,\ZZ)$   \\ \hline \hline \endfirsthead

\hline
 ${\bf{w}}=(w_0,w_1,w_2, w_3, w_4)$ & Polynomial & Type & $d$   &  $\mu$  & $H_{3}(M,\ZZ)$  \\ \hline \hline \endhead

(9,15,5,10,7)  & $z_{0}^{5}+z_{1}^{3}+z_{1}z_{2}^{6}+z_{2}z_{3}^{4}+z_{3}z_{4}^{5}$ & \tiny{BP + Chain} & 45 & 1216 & $\ZZ^{32}$   \\  \hline

(16,56,7,21,13)  & $z_{0}^{7}+z_{1}^{2}+z_{1}z_{2}^{8}+z_{2}z_{3}^{5}+z_{3}z_{4}^{7}$ & \tiny{BP + Chain} & 112 & 2970 & $\ZZ^{30}$   \\  \hline

(35,15,15,9,32)  & $z_{0}^{3}+z_{1}^{7}+z_{1}z_{2}^{6}+z_{2}z_{3}^{10}+z_{3}z_{4}^{3}$ & \tiny{BP + Chain} & 105 & 1752 & $\ZZ^{24}$   \\  \hline

(50,30,15,9,47)  & $z_{0}^{3}+z_{1}^{5}+z_{1}z_{2}^{8}+z_{2}z_{3}^{15}+z_{3}z_{4}^{3}$ & \tiny{BP + Chain} & 150 & 2472 & $\ZZ^{24}$   \\  \hline

(35,56,63,9,153)  & $z_{0}^{9}+z_{0}z_{1}^{5}+z_{2}^{5}+z_{2}z_{3}^{28}+z_{3}z_{4}^{2}$ & \tiny{Chain + Chain} & 315 & 5328 & $\ZZ^{24}$   \\  \hline

(141,9,138,95,41)  & $z_{0}^{3}+z_{1}^{47}+z_{1}z_{2}^{3}+z_{2}z_{3}^{3}+z_{3}z_{4}^{8}$ & \tiny{BP + Chain} & 423 & 6112 & $\ZZ^{16}$   \\  \hline

(48,10,115,25,43)  & $z_{0}^{5}+z_{1}^{24}+z_{1}z_{2}^{2}+z_{2}z_{3}^{5}+z_{3}z_{4}^{5}$ & \tiny{BP + Chain} & 240 & 3940 & $\ZZ^{20}$   \\  \hline

(49,35,105,10,47)  & $z_{0}^{5}+z_{1}^{7}+z_{1}z_{2}^{2}+z_{2}z_{3}^{14}+z_{3}z_{4}^{5}$ & \tiny{BP + Chain } & 245 & 3168 & $\ZZ^{16}$   \\  \hline

(714,476,119,11,109)  & $z_{0}^{2}+z_{1}^{3}+z_{1}z_{2}^{8}+z_{2}z_{3}^{119}+z_{3}z_{4}^{13}$ & \tiny{BP + Chain } &  1428 & 34294 & $\ZZ^{26}$   \\  \hline

(65,65,39,15,12)  & $z_{0}^{3}+z_{1}^{3}+z_{2}^{5}+z_{3}^{13}+z_{3}z_{4}^{15}$ & \tiny{BP + Chain} & 195 & 2928 & $\ZZ^{24}$   \\  \hline

(119,51,153,12,23)  & $z_{0}^{3}+z_{1}^{7}+z_{1}z_{2}^{2}+z_{2}z_{3}^{17}+z_{3}z_{4}^{15}$ & \tiny{BP + Chain} & 357 & 6680 & $\ZZ^{20}$   \\  \hline

(57,38,95,12,27)  & $z_{0}^{4}+z_{1}^{6}+z_{1}z_{2}^{2}+z_{3}^{19}+z_{3}z_{4}^{8}$ & \tiny{BP + Chain + Chain} & 228 & 2814 & $\ZZ^{18}$   \\  \hline

(81,45,12,131,137)  & $z_{0}^{5}+z_{1}^{9}+z_{1}z_{2}^{30}+z_{2}z_{3}^{3}+z_{3}z_{4}^{2}$ & \tiny{BP + Chain} & 405 & 4288 & $\ZZ^{16}$   \\  \hline

(236,12,87,207,167)  & $z_{0}^{3}+z_{1}^{59}+z_{1}z_{2}^{8}+z_{2}z_{3}^{3}+z_{3}z_{4}^{3}$ & \tiny{BP + Chain} & 708 & 6492 & $\ZZ^{12}$   \\  \hline

(16,14,49,21,13)  & $z_{0}^{7}+z_{1}^{8}+z_{1}z_{2}^{2}+z_{2}z_{3}^{3}+z_{3}z_{4}^{7}$ & \tiny{BP + Chain} & 112 & 1782 & $\ZZ^{18}$   \\  \hline

(35,21,14,13,23)  & $z_{0}^{3}+z_{1}^{5}+z_{1}z_{2}^{6}+z_{2}z_{3}^{7}+z_{3}z_{4}^{4}$ & \tiny{BP + Chain} & 105 & 1312 & $\ZZ^{16}$   \\  \hline

(495,135,736,107,13)  & $z_{0}^{3}+z_{1}^{11}+z_{4}z_{2}^{2}+z_{2}z_{3}^{7}+z_{3}z_{4}^{106}$ & \tiny{BP + Cycle} & 1485 & 29680 & $\ZZ^{20}$   \\  \hline

(17,14,105,75,45)  & $z_{0}^{15}+z_{0}z_{1}^{17}+z_{4}z_{2}^{2}+z_{2}z_{3}^{2}+z_{3}z_{4}^{4}$ & \tiny{Chain + Cycle} & 255 & 3856 & $\ZZ^{16}$   \\  \hline

(133,38,19,14,63)  & $z_{0}^{2}+z_{1}^{7}+z_{2}^{14}+z_{3}^{19}+z_{3}z_{4}^{4}$ & \tiny{BP + Chain} & 266 & 4524 & $\ZZ^{24}$   \\  \hline

(119,21,14,153,51)  & $z_{0}^{3}+z_{1}^{17}+z_{1}z_{2}^{24}+z_{4}z_{3}^{2}+z_{3}z_{4}^{4}$ & \tiny{BP + Chain + Cycle} & 357 & 6272 & $\ZZ^{32}$   \\  \hline

(33,77,77,14,31)  & $z_{0}^{7}+z_{1}^{3}+z_{1}z_{2}^{2}+z_{2}z_{3}^{11}+z_{3}z_{4}^{7}$ & \tiny{BP + Chain} & 231 & 2400 & $\ZZ^{12}$   \\  \hline

(315,90,135,14,77)  & $z_{0}^{2}+z_{1}^{7}+z_{1}z_{2}^{4}+z_{3}^{45}+z_{3}z_{4}^{8}$ & \tiny{BP + Chain + Chain} & 630 & 6952 & $\ZZ^{16}$   \\  \hline

(15,20,85,17,119)  & $z_{0}^{17}+z_{0}z_{1}^{12}+z_{2}^{3}+z_{2}z_{3}^{10}+z_{3}z_{4}^{2}$ & \tiny{Chain + Chain} & 255 & 6016 & $\ZZ^{32}$   \\  \hline

(24,30,15,35,17)  & $z_{0}^{5}+z_{1}^{4}+z_{1}z_{2}^{6}+z_{2}z_{3}^{3}+z_{3}z_{4}^{5}$ & \tiny{BP + Chain} & 120 & 1236 & $\ZZ^{12}$   \\  \hline

(65,39,15,18,59)  & $z_{0}^{3}+z_{1}^{5}+z_{2}^{13}+z_{2}z_{3}^{10}+z_{3}z_{4}^{3}$ & \tiny{BP + Chain} & 195 & 2176 & $\ZZ^{16}$   \\  \hline

(51,15,24,77,89)  & $z_{0}^{5}+z_{1}^{17}+z_{1}z_{2}^{10}+z_{2}z_{3}^{3}+z_{3}z_{4}^{2}$ & \tiny{BP + Chain} & 255 & 2656 & $\ZZ^{16}$   \\  \hline

(143,26,39,15,207)  & $z_{0}^{3}+z_{0}z_{1}^{11}+z_{2}^{11}+z_{2}z_{3}^{26}+z_{3}z_{4}^{2}$ & \tiny{Chain + Chain} & 429 & 9176 & $\ZZ^{32}$   \\  \hline

(215,15,315,33,68)  & $z_{0}^{3}+z_{1}^{43}+z_{1}z_{2}^{2}+z_{2}z_{3}^{10}+z_{3}z_{4}^{9}$ & \tiny{BP + Chain} & 645 & 13848 & $\ZZ^{24}$   \\  \hline

(176,48,15,171,119)  & $z_{0}^{3}+z_{1}^{11}+z_{1}z_{2}^{32}+z_{2}z_{3}^{3}+z_{3}z_{4}^{3}$ & \tiny{BP + Chain} & 528 & 4908 & $\ZZ^{12}$   \\  \hline

(115,69,92,15,55)  & $z_{0}^{3}+z_{1}^{5}+z_{1}z_{2}^{3}+z_{3}^{23}+z_{3}z_{4}^{6}$ & \tiny{BP + Chain + Chain} & 345 & 2552 & $\ZZ^{12}$   \\  \hline

(117,65,104,285,15)  & $z_{0}^{5}+z_{1}^{9}+z_{1}z_{2}^{5}+z_{4}z_{3}^{2}+z_{3}z_{4}^{20}$ & \tiny{BP + Chain + Cycle} & 585 & 5920 & $\ZZ^{16}$   \\  \hline

(395,15,65,474,237)  & $z_{0}^{3}+z_{1}^{79}+z_{1}z_{2}^{18}+z_{4}z_{3}^{2}+z_{3}z_{4}^{3}$ & \tiny{BP + Chain + Cycle} & 1185 & 16128 & $\ZZ^{24}$   \\  \hline

(132,110,275,15,129)  & $z_{0}^{5}+z_{1}^{6}+z_{1}z_{2}^{2}+z_{3}^{44}+z_{3}z_{4}^{5}$ & \tiny{BP + Chain + Chain} & 660 & 4956 & $\ZZ^{12}$   \\  \hline

(228,570,15,125,203)  & $z_{0}^{5}+z_{1}^{2}+z_{1}z_{2}^{38}+z_{2}z_{3}^{9}+z_{3}z_{4}^{5}$ & \tiny{BP + Chain} & 1140 & 11244 & $\ZZ^{12}$   \\  \hline

(655,15,130,367,799)  & $z_{0}^{3}+z_{1}^{131}+z_{1}z_{2}^{15}+z_{2}z_{3}^{5}+z_{3}z_{4}^{2}$ & \tiny{BP + Chain} & 1965 & 23320 & $\ZZ^{20}$   \\  \hline

(740,222,999,15,245)  & $z_{0}^{3}+z_{1}^{10}+z_{1}z_{2}^{2}+z_{3}^{148}+z_{3}z_{4}^{9}$ & \tiny{BP + Chain + Chain} & 2220 & 26070 & $\ZZ^{18}$   \\  \hline

(200,16,127,39,19)  & $z_{0}^{2}+z_{1}^{25}+z_{4}z_{2}^{3}+z_{2}z_{3}^{7}+z_{3}z_{4}^{19}$ & \tiny{BP + Cycle} & 400 & 9576 & $\ZZ^{24}$   \\  \hline

(135,240,765,17,1139)  & $z_{0}^{17}+z_{0}z_{1}^{9}+z_{2}^{3}+z_{2}z_{3}^{90}+z_{3}z_{4}^{2}$ & \tiny{Chain + Chain} & 2295 & 37264 & $\ZZ^{20}$   \\  \hline

(77,33,33,18,71)  & $z_{0}^{3}+z_{1}^{7}+z_{1}z_{2}^{6}+z_{2}z_{3}^{11}+z_{3}z_{4}^{3}$ & \tiny{BP + Chain} & 231 & 1920 & $\ZZ^{12}$   \\  \hline

(4928,896,2921,19,1093)  & $z_{0}^{2}+z_{1}^{11}+z_{4}z_{2}^{3}+z_{2}z_{3}^{365}+z_{3}z_{4}^{9}$ & \tiny{BP + Cycle} & 9856 & 98550 & $\ZZ^{10}$   \\  \hline

(55,22,99,20,25)  & $z_{0}^{4}+z_{1}^{10}+z_{1}z_{2}^{2}+z_{3}^{11}+z_{3}z_{4}^{8}$ & \tiny{BP + Chain + Chain} & 220 & 2574 & $\ZZ^{18}$   \\  \hline

(2990,20,745,349,1877)  & $z_{0}^{2}+z_{1}^{299}+z_{1}z_{2}^{8}+z_{2}z_{3}^{15}+z_{3}z_{4}^{3}$ & \tiny{BP + Chain} & 5980 & 73854 & $\ZZ^{14}$   \\  \hline

(77,21,35,66,33)  & $z_{0}^{3}+z_{1}^{11}+z_{1}z_{2}^{6}+z_{4}z_{3}^{3}+z_{3}z_{4}^{5}$ & \tiny{BP + Chain + Cycle} & 231 & 1680 & $\ZZ^{12}$   \\  \hline

(203,21,294,45,47)  & $z_{0}^{3}+z_{1}^{29}+z_{1}z_{2}^{2}+z_{2}z_{3}^{7}+z_{3}z_{4}^{12}$ & \tiny{BP + Chain} & 609 & 8992 & $\ZZ^{16}$   \\  \hline

(119,119,51,21,48)  & $z_{0}^{3}+z_{1}^{3}+z_{2}^{7}+z_{3}^{17}+z_{3}z_{4}^{7}$ & \tiny{BP + Chain} & 357 & 2472 & $\ZZ^{12}$   \\  \hline

(27,66,207,23,299)  & $z_{0}^{23}+z_{0}z_{1}^{9}+z_{2}^{3}+z_{2}z_{3}^{18}+z_{3}z_{4}^{2}$ & \tiny{Chain + Chain} & 621 & 10360 & $\ZZ^{20}$   \\  \hline

(172,473,1978,23,1311)  & $z_{0}^{23}+z_{0}z_{1}^{8}+z_{2}^{2}+z_{2}z_{3}^{86}+z_{3}z_{4}^{3}$ & \tiny{Chain + Chain} & 3956 & 55890 & $\ZZ^{18}$   \\  \hline

(6615,2835,9472,23,901)  & $z_{0}^{3}+z_{1}^{7}+z_{4}z_{2}^{2}+z_{2}z_{3}^{451}+z_{3}z_{4}^{22}$ & \tiny{BP + Cycle} & 19845 & 238128 & $\ZZ^{12}$   \\  \hline

(28,182,91,39,25)  & $z_{0}^{13}+z_{1}^{2}+z_{1}z_{2}^{2}+z_{2}z_{3}^{7}+z_{3}z_{4}^{13}$ & \tiny{BP + Chain} & 364 & 4068 & $\ZZ^{12}$   \\  \hline

(195,25,190,157,409)  & $z_{0}^{5}+z_{1}^{39}+z_{1}z_{2}^{5}+z_{2}z_{3}^{5}+z_{3}z_{4}^{2}$ & \tiny{BP + Chain} & 975 & 4528 & $\ZZ^{8}$   \\  \hline

(1045,418,190,25,413)  & $z_{0}^{2}+z_{1}^{5}+z_{2}^{11}+z_{2}z_{3}^{76}+z_{3}z_{4}^{5}$ & \tiny{BP + Chain} & 2090 & 13416 & $\ZZ^{8}$   \\  \hline

(9610,620,25,3839,5127)  & $z_{0}^{2}+z_{1}^{31}+z_{1}z_{2}^{744}+z_{2}z_{3}^{5}+z_{3}z_{4}^{3}$ & \tiny{BP + Chain} & 19220 & 253674 & $\ZZ^{18}$   \\  \hline

(4928,896,3277,731,25)  & $z_{0}^{2}+z_{1}^{11}+z_{4}z_{2}^{3}+z_{2}z_{3}^{9}+z_{3}z_{4}^{365}$ & \tiny{BP + Cycle} & 9856 & 98550 & $\ZZ^{10}$   \\  \hline

(170,102,51,27,161)  & $z_{0}^{3}+z_{1}^{5}+z_{1}z_{2}^{8}+z_{2}z_{3}^{17}+z_{3}z_{4}^{3}$ & \tiny{BP + Chain} & 510 & 2792 & $\ZZ^{8}$   \\  \hline


(92,414,207,27,89)  & $z_{0}^{9}+z_{1}^{2}+z_{1}z_{2}^{2}+z_{2}z_{3}^{23}+z_{3}z_{4}^{9}$ & \tiny{BP + Chain} & 828 & 5912 & $\ZZ^{8}$   \\  \hline

(1251,27,414,371,1691)  & $z_{0}^{3}+z_{1}^{139}+z_{1}z_{2}^{9}+z_{2}z_{3}^{9}+z_{3}z_{4}^{2}$ & \tiny{BP + Chain} & 3753 & 24744 & $\ZZ^{12}$   \\  \hline

(28,161,658,47,423)  & $z_{0}^{47}+z_{0}z_{1}^{8}+z_{2}^{2}+z_{2}z_{3}^{14}+z_{3}z_{4}^{3}$ & \tiny{Chain + Chain} & 1316 & 18810 & $\ZZ^{18}$   \\  \hline

(455,105,28,191,587)  & $z_{0}^{3}+z_{1}^{13}+z_{1}z_{2}^{45}+z_{2}z_{3}^{7}+z_{3}z_{4}^{2}$ & \tiny{BP + Chain} & 1365 & 9336 & $\ZZ^{12}$   \\  \hline

(854,28,105,229,493)  & $z_{0}^{2}+z_{1}^{61}+z_{1}z_{2}^{16}+z_{2}z_{3}^{7}+z_{3}z_{4}^{3}$ & \tiny{BP + Chain} & 1708 & 14580 & $\ZZ^{12}$   \\  \hline

(16646,4756,29,1147,10715)  & $z_{0}^{2}+z_{1}^{7}+z_{1}z_{2}^{984}+z_{2}z_{3}^{29}+z_{3}z_{4}^{3}$ & \tiny{BP + Chain} & 33292 & 406386 & $\ZZ^{18}$   \\  \hline

(84,35,77,30,195)  & $z_{0}^{5}+z_{1}^{12}+z_{1}z_{2}^{5}+z_{3}^{14}+z_{3}z_{4}^{2}$ & \tiny{BP + Chain + Chain} & 420 & 2940 & $\ZZ^{12}$   \\  \hline

(405,162,30,65,149)  & $z_{0}^{2}+z_{1}^{5}+z_{2}^{27}+z_{2}z_{3}^{12}+z_{3}z_{4}^{5}$ & \tiny{BP + Chain} & 810 & 5288 & $\ZZ^{8}$   \\  \hline

(1245,830,30,123,263)  & $z_{0}^{2}+z_{1}^{3}+z_{2}^{83}+z_{2}z_{3}^{20}+z_{3}z_{4}^{9}$ & \tiny{BP + Chain} & 2490 & 26724 & $\ZZ^{12}$   \\  \hline

(319,87,58,31,463)  & $z_{0}^{3}+z_{1}^{11}+z_{1}z_{2}^{15}+z_{2}z_{3}^{29}+z_{3}z_{4}^{2}$ & \tiny{BP + Chain} & 957 & 9880 & $\ZZ^{20}$   \\  \hline

(1323,567,1921,32,127)  & $z_{0}^{3}+z_{1}^{7}+z_{4}z_{2}^{2}+z_{2}z_{3}^{64}+z_{3}z_{4}^{31}$ & \tiny{BP + Cycle} & 3969 & 47616 & $\ZZ^{12}$   \\  \hline

(341,93,62,495,33)  & $z_{0}^{3}+z_{1}^{11}+z_{1}z_{2}^{15}+z_{4}z_{3}^{2}+z_{3}z_{4}^{16}$ & \tiny{BP + Chain + Cycle} & 1023 & 9920 & $\ZZ^{20}$   \\  \hline

(1173,782,102,33,257)  & $z_{0}^{2}+z_{1}^{3}+z_{2}^{23}+z_{2}z_{3}^{68}+z_{3}z_{4}^{9}$ & \tiny{BP + Chain} & 2346 & 25068 & $\ZZ^{12}$   \\  \hline

(440,330,33,117,401)  & $z_{0}^{3}+z_{1}^{4}+z_{1}z_{2}^{30}+z_{2}z_{3}^{11}+z_{3}z_{4}^{3}$ & \tiny{BP + Chain} & 1320 & 5514 & $\ZZ^{6}$   \\  \hline

(935,33,308,1275,255)  & $z_{0}^{3}+z_{1}^{85}+z_{1}z_{2}^{9}+z_{4}z_{3}^{2}+z_{3}z_{4}^{6}$ & \tiny{BP + Chain + Cycle} & 2805 & 16344 & $\ZZ^{12}$   \\  \hline

(1700,510,2295,33,563)  & $z_{0}^{3}+z_{1}^{10}+z_{1}z_{2}^{2}+z_{2}z_{3}^{85}+z_{3}z_{4}^{9}$ & \tiny{BP + Chain} & 5100 & 27222 & $\ZZ^{6}$   \\  \hline

(55,77,44,175,35)  & $z_{0}^{7}+z_{1}^{5}+z_{1}z_{2}^{7}+z_{4}z_{3}^{2}+z_{3}z_{4}^{6}$ & \tiny{BP + Chain + Cycle} & 385 & 2232 & $\ZZ^{12}$   \\  \hline

(189,35,182,405,135)  & $z_{0}^{5}+z_{1}^{27}+z_{1}z_{2}^{5}+z_{4}z_{3}^{2}+z_{3}z_{4}^{4}$ & \tiny{BP + Chain + Cycle} & 945 & 3488 & $\ZZ^{8}$   \\  \hline

(324,270,675,35,317)  & $z_{0}^{5}+z_{1}^{6}+z_{1}z_{2}^{2}+z_{2}z_{3}^{27}+z_{3}z_{4}^{5}$ & \tiny{BP + Chain} & 1620 & 5212 & $\ZZ^{4}$   \\  \hline

(6102,36,507,3899,1661)  & $z_{0}^{2}+z_{1}^{339}+z_{1}z_{2}^{24}+z_{2}z_{3}^{3}+z_{3}z_{4}^{5}$ & \tiny{BP + Chain} & 12204 & 105430 & $\ZZ^{10}$   \\  \hline

(345,45,110,37,499)  & $z_{0}^{3}+z_{1}^{23}+z_{1}z_{2}^{9}+z_{2}z_{3}^{25}+z_{3}z_{4}^{2}$ & \tiny{BP + Chain} & 1035 & 10720 & $\ZZ^{12}$   \\  \hline

(315,175,280,37,769)  & $z_{0}^{5}+z_{1}^{9}+z_{1}z_{2}^{5}+z_{2}z_{3}^{35}+z_{3}z_{4}^{2}$ & \tiny{BP + Chain }& 1575 & 6448 & $\ZZ^{8}$   \\  \hline

(27306,18204,37,1475,7591)  & $z_{0}^{2}+z_{1}^{3}+z_{1}z_{2}^{984}+z_{2}z_{3}^{37}+z_{3}z_{4}^{7}$ & \tiny{BP + Chain} & 54612 & 658294 & $\ZZ^{14}$   \\  \hline

(416,624,39,93,77)  & $z_{0}^{3}+z_{1}^{2}+z_{1}z_{2}^{16}+z_{2}z_{3}^{13}+z_{3}z_{4}^{15}$ & \tiny{BP + Chain} & 1248 & 11710 & $\ZZ^{10}$   \\  \hline

(6615,2835,9901,452,43)  & $z_{0}^{3}+z_{1}^{7}+z_{4}z_{2}^{2}+z_{2}z_{3}^{22}+z_{3}z_{4}^{451}$ & \tiny{BP + Cycle} & 19845 & 238128 & $\ZZ^{12}$   \\  \hline

(255,45,60,47,359)  & $z_{0}^{3}+z_{1}^{17}+z_{1}z_{2}^{12}+z_{2}z_{3}^{15}+z_{3}z_{4}^{2}$ & \tiny{BP + Chain} & 765 & 6496 & $\ZZ^{16}$   \\  \hline

(297,81,45,47,422)  & $z_{0}^{3}+z_{1}^{11}+z_{1}z_{2}^{18}+z_{2}z_{3}^{18}+z_{3}z_{4}^{2}$ & \tiny{BP + Chain} & 891 & 7504 & $\ZZ^{12}$   \\  \hline

(464,348,87,45,449)  & $z_{0}^{3}+z_{1}^{4}+z_{1}z_{2}^{12}+z_{2}z_{3}^{29}+z_{3}z_{4}^{3}$ & \tiny{BP + Chain} & 1392 & 5658 & $\ZZ^{6}$   \\  \hline

(387,215,344,945,45)  & $z_{0}^{5}+z_{1}^{9}+z_{1}z_{2}^{5}+z_{4}z_{3}^{2}+z_{3}z_{4}^{22}$ & \tiny{BP + Chain + Cycle} & 1935 & 6512 & $\ZZ^{8}$   \\  \hline

(6600,4400,275,47,1879)  & $z_{0}^{2}+z_{1}^{3}+z_{1}z_{2}^{32}+z_{2}z_{3}^{275}+z_{3}z_{4}^{7}$ & \tiny{BP + Chain} & 13200 & 158494 & $\ZZ^{14}$   \\  \hline

(494,76,57,49,313)  & $z_{0}^{2}+z_{1}^{13}+z_{1}z_{2}^{16}+z_{2}z_{3}^{19}+z_{3}z_{4}^{3}$ & \tiny{BP + Chain} & 988 & 8100 & $\ZZ^{12}$   \\  \hline

(608,114,855,51,197)  & $z_{0}^{3}+z_{1}^{16}+z_{1}z_{2}^{2}+z_{2}z_{3}^{19}+z_{3}z_{4}^{9}$ & \tiny{BP + Chain} & 1824 & 9762 & $\ZZ^{6}$   \\  \hline

(2996,856,107,55,1979)  & $z_{0}^{2}+z_{1}^{7}+z_{1}z_{2}^{48}+z_{2}z_{3}^{107}+z_{3}z_{4}^{3}$ & \tiny{BP + Chain} & 5992 & 72234 & $\ZZ^{18}$   \\  \hline

(34496,9856,22205,59,2377)  & $z_{0}^{2}+z_{1}^{7}+z_{4}z_{2}^{3}+z_{2}z_{3}^{793}+z_{3}z_{4}^{29}$ & \tiny{BP + Cycle} & 68992 & 413946 & $\ZZ^{6}$   \\  \hline

(1323,567,1954,65,61)  & $z_{0}^{3}+z_{1}^{7}+z_{4}z_{2}^{2}+z_{2}z_{3}^{31}+z_{3}z_{4}^{64}$ & \tiny{BP + Cycle} & 3969 & 47616 & $\ZZ^{12}$   \\  \hline

(312,780,65,115,289)  & $z_{0}^{5}+z_{1}^{2}+z_{1}z_{2}^{12}+z_{2}z_{3}^{13}+z_{3}z_{4}^{5}$ & \tiny{BP + Chain} & 1560 & 5084 & $\ZZ^{4}$   \\  \hline

(8960,2560,5793,67,541)  & $z_{0}^{2}+z_{1}^{7}+z_{4}z_{2}^{3}+z_{2}z_{3}^{181}+z_{3}z_{4}^{33}$ & \tiny{BP + Cycle} & 17920 & 107514 & $\ZZ^{6}$   \\  \hline

(4044,2696,1145,131,73)  & $z_{0}^{2}+z_{1}^{3}+z_{4}z_{2}^{7}+z_{2}z_{3}^{53}+z_{3}z_{4}^{109}$ & \tiny{BP + Cycle} & 8088 & 80878 & $\ZZ^{10}$   \\  \hline

(2331,999,74,187,3403)  & $z_{0}^{3}+z_{1}^{7}+z_{1}z_{2}^{81}+z_{2}z_{3}^{37}+z_{3}z_{4}^{2}$ & \tiny{BP + Chain} & 6993 & 43080 & $\ZZ^{12}$   \\  \hline

(6272,1792,4069,75,337)  & $z_{0}^{2}+z_{1}^{7}+z_{4}z_{2}^{3}+z_{2}z_{3}^{113}+z_{3}z_{4}^{37}$ & \tiny{BP + Cycle} & 12544 & 75258 & $\ZZ^{6}$   \\  \hline

(3002,76,741,277,1909)  & $z_{0}^{2}+z_{1}^{79}+z_{1}z_{2}^{8}+z_{2}z_{3}^{19}+z_{3}z_{4}^{3}$ & \tiny{BP + Chain} & 6004 & 24570 & $\ZZ^{6}$   \\  \hline

(3861,81,1278,1145,5219)  & $z_{0}^{3}+z_{1}^{143}+z_{1}z_{2}^{9}+z_{2}z_{3}^{9}+z_{3}z_{4}^{2}$ & \tiny{BP + Chain} & 11583 & 25456 & $\ZZ^{4}$   \\  \hline

(34496,9856,22969,1587,85)  & $z_{0}^{2}+z_{1}^{7}+z_{4}z_{2}^{3}+z_{2}z_{3}^{29}+z_{3}z_{4}^{793}$ & \tiny{BP + Cycle} & 68992 & 413946 & $\ZZ^{6}$   \\  \hline

(5096,1456,3319,87,235)  & $z_{0}^{2}+z_{1}^{7}+z_{4}z_{2}^{3}+z_{2}z_{3}^{79}+z_{3}z_{4}^{43}$ & \tiny{BP + Cycle} & 10192 & 61146 & $\ZZ^{6}$   \\  \hline

(8960,2560,5941,363,97)  & $z_{0}^{2}+z_{1}^{7}+z_{4}z_{2}^{3}+z_{2}z_{3}^{33}+z_{3}z_{4}^{181}$ & \tiny{BP + Cycle} & 17920 & 107514 & $\ZZ^{6}$   \\  \hline

(4150,100,205,1619,2227)  & $z_{0}^{2}+z_{1}^{83}+z_{1}z_{2}^{40}+z_{2}z_{3}^{5}+z_{3}z_{4}^{3}$ & \tiny{BP + Chain} & 8300 & 36438 & $\ZZ^{6}$   \\  \hline

(3535,1515,202,103,5251)  & $z_{0}^{3}+z_{1}^{7}+z_{1}z_{2}^{45}+z_{2}z_{3}^{101}+z_{3}z_{4}^{2}$ & \tiny{BP + Chain} & 10605 & 64248 & $\ZZ^{12}$   \\  \hline

(2666,172,645,109,1741)  & $z_{0}^{2}+z_{1}^{31}+z_{1}z_{2}^{8}+z_{2}z_{3}^{43}+z_{3}z_{4}^{3}$ & \tiny{BP + Chain} & 5332 & 21546 & $\ZZ^{6}$   \\  \hline

(6272,1792,4145,227,109)  & $z_{0}^{2}+z_{1}^{7}+z_{4}z_{2}^{3}+z_{2}z_{3}^{37}+z_{3}z_{4}^{113}$ & \tiny{BP + Cycle} & 12544 & 75258 & $\ZZ^{6}$   \\  \hline

(5096,1456,3355,159,127)  & $z_{0}^{2}+z_{1}^{7}+z_{4}z_{2}^{3}+z_{2}z_{3}^{43}+z_{3}z_{4}^{79}$ & \tiny{BP + Cycle} & 10192 & 61146 & $\ZZ^{6}$   \\  \hline

(10368,6912,1741,131,1585)  & $z_{0}^{2}+z_{1}^{3}+z_{4}z_{2}^{11}+z_{2}z_{3}^{145}+z_{3}z_{4}^{13}$ & \tiny{BP + Cycle} & 20736 & 41470 & $\ZZ^{2}$   \\  \hline

(10368,6912,1873,1451,133)  & $z_{0}^{2}+z_{1}^{3}+z_{4}z_{2}^{11}+z_{2}z_{3}^{13}+z_{3}z_{4}^{145}$ & \tiny{BP + Cycle} & 20736 & 41470 & $\ZZ^{2}$   \\  \hline

(18970,5420,813,137,12601)  & $z_{0}^{2}+z_{1}^{7}+z_{1}z_{2}^{40}+z_{2}z_{3}^{271}+z_{3}z_{4}^{3}$ & \tiny{BP + Chain} & 37940 & 152034 & $\ZZ^{6}$   \\  \hline

(6885,1215,2160,137,10259)  & $z_{0}^{3}+z_{1}^{17}+z_{1}z_{2}^{9}+z_{2}z_{3}^{135}+z_{3}z_{4}^{2}$ & \tiny{BP + Chain} & 20655 & 41584 & $\ZZ^{4}$   \\  \hline

(3240,2160,559,191,331)  & $z_{0}^{2}+z_{1}^{3}+z_{4}z_{2}^{11}+z_{2}z_{3}^{31}+z_{3}z_{4}^{19}$ & \tiny{BP + Cycle} & 6480 & 12958 & $\ZZ^{2}$   \\  \hline

(3240,2160,571,311,199)  & $z_{0}^{2}+z_{1}^{3}+z_{4}z_{2}^{11}+z_{2}z_{3}^{19}+z_{3}z_{4}^{31}$ & \tiny{BP + Cycle} & 6480 & 12958 & $\ZZ^{2}$   \\  \hline

(168480,112320,46837,223,9101)  & $z_{0}^{2}+z_{1}^{3}+z_{4}z_{2}^{7}+z_{2}z_{3}^{1301}+z_{3}z_{4}^{37}$ & \tiny{BP + Cycle} & 336960 & 673918 & $\ZZ^{2}$   \\  \hline

(41472,27648,11561,247,2017)  & $z_{0}^{2}+z_{1}^{3}+z_{4}z_{2}^{7}+z_{2}z_{3}^{289}+z_{3}z_{4}^{41}$ & \tiny{BP + Cycle} & 82944 & 165886 & $\ZZ^{2}$   \\  \hline

(168480,112320,48101,7807,253)  & $z_{0}^{2}+z_{1}^{3}+z_{4}z_{2}^{7}+z_{2}z_{3}^{37}+z_{3}z_{4}^{1301}$ & \tiny{BP + Cycle} & 336960 & 673918 & $\ZZ^{2}$   \\  \hline

(41472,27648,11809,1735,281)  & $z_{0}^{2}+z_{1}^{3}+z_{4}z_{2}^{7}+z_{2}z_{3}^{41}+z_{3}z_{4}^{289}$ & \tiny{BP + Cycle} & 82944 & 165886 & $\ZZ^{2}$   \\  \hline

(24840,16560,6947,283,1051)  & $z_{0}^{2}+z_{1}^{3}+z_{4}z_{2}^{7}+z_{2}z_{3}^{151}+z_{3}z_{4}^{47}$ & \tiny{BP + Cycle} & 49680 & 99358 & $\ZZ^{2}$   \\  \hline

(24840,16560,7051,907,323)  & $z_{0}^{2}+z_{1}^{3}+z_{4}z_{2}^{7}+z_{2}z_{3}^{47}+z_{3}z_{4}^{151}$ & \tiny{BP + Cycle} & 49680 & 99358 & $\ZZ^{2}$   \\  \hline

(18792,12528,5279,355,631)  & $z_{0}^{2}+z_{1}^{3}+z_{4}z_{2}^{7}+z_{2}z_{3}^{91}+z_{3}z_{4}^{59}$ & \tiny{BP + Cycle} & 37584 & 75166 & $\ZZ^{2}$   \\  \hline

(18792,12528,5311,547,407)  & $z_{0}^{2}+z_{1}^{3}+z_{4}z_{2}^{7}+z_{2}z_{3}^{59}+z_{3}z_{4}^{91}$ & \tiny{BP + Cycle} & 37584 & 75166 & $\ZZ^{2}$   \\  \hline

\end{longtable}
}
\end{center}
\medskip

Reid's list of 95 codimension one K3 surfaces of the form 
$Y_d\subset\mathbb{CP}( w_{1}, w_{2}, w_{3}, w_{4})$ with $\sum_{i=1}^{4} w_{i}=d$  can be used to generate $\mathbb{Q}$-Fano 3-folds \cite{IF}. These threefolds are hypersurfaces $X_{d}$ of degree $d$ in weighted projective spaces of the form $\mathbb{CP}(1, w_{1}, w_{2}, w_{3}, w_{4})$ with 
$\sum_{i=1}^{4} w_{i}=d$.  In \cite{Ch}, Cheltsov studied  these sort of $\mathbb{Q}$-Fano 3-folds and  proves that 91 elements  admit K\"ahler-Einstein orbifold metrics and thereby the links associated to them admit Sasaki-Einstein metrics. The four that fail the test are numbers $1,2,4$, and $5$ in the list given in \cite{IF}.  
From this list, we found that 88 of these links are links of singularities of chain type, cycle type or an   
iterated Thom-Sebastiani sum of chain type singularities and cycle type singularities. 
We compute the third homology group for this lot  and found 6 new examples of Sasaki-Einstein 7-manifolds with no torsion on the third homology group. In Table 3 we present these six elements and include the two elements found by Boyer in \cite{Bo}, the ones with weights  $(1,1,1,4,6)$ and $(1,1,6,14, 21)$. 
\medskip

\begin{center}
\noindent{\bf Table 3: SE  7-manifolds of the form $\#k (S^3\times S^4)$ from Fletcher's list}
\end{center}
\medskip

\begin{center}
\begin{longtable}{| c | c | c | c | c | c |} \hline

${\bf{w}}=(w_0,w_1,w_2, w_3, w_4)$ & Polynomial & Type &   $d$   &  $\mu$  & $H_{3}(M,\ZZ)$   \\ \hline \hline \endfirsthead

\hline
 ${\bf{w}}=(w_0,w_1,w_2, w_3, w_4)$ & Polynomial & Type & $d$   &  $\mu$  & $H_{3}(M,\ZZ)$  \\ \hline \hline \endhead


\hline

(1,1,1,4,6) & $z_{0}^{12}+z_{1}^{12}+z_{2}^{12}+z_{3}^{3}+z_{4}^{2}$ & \tiny{BP} & 12 & 2662 & $\ZZ^{222}$ \\  \hline

(7,3,1,10,1) & $z_{0}^{3}+z_{1}^{7}+z_{1}z_{2}^{18}+z_{2}z_{3}^{2}+z_{3}z_{4}^{11}$ & \tiny{BP + Chain} & 21 & 5280 & $\ZZ^{252}$ \\  \hline

(14,4,1,9,1) & $z_{0}^{2}+z_{1}^{7}+z_{1}z_{2}^{24}+z_{2}z_{3}^{3}+z_{3}z_{4}^{19}$ & \tiny{BP + Chain} & 28 & 9234 & $\ZZ^{330}$ \\  \hline

(11,3,5,14,1) & $z_{0}^{3}+z_{1}^{11}+z_{1}z_{2}^{6}+z_{2}z_{3}^{2}+z_{3}z_{4}^{19}$ & \tiny{BP + Chain} & 33 & 4864 & $\ZZ^{148}$ \\  \hline

(18,12,1,5,1) & $z_{0}^{2}+z_{1}^{3}+z_{1}z_{2}^{24}+z_{2}z_{3}^{7}+z_{3}z_{4}^{31}$ & \tiny{BP + Chain} & 36 & 15190 & $\ZZ^{422}$ \\  \hline

(1,1,6,14,21) & $z_{0}^{42}+z_{1}^{42}+z_{2}^{7}+z_{3}^{3}+z_{4}^{2}$ & \tiny{BP} & 42 & 20172 & $\ZZ^{480}$ \\  \hline

(22,4,5,13,1) & $z_{0}^{2}+z_{1}^{11}+z_{1}z_{2}^{8}+z_{2}z_{3}^{3}+z_{3}z_{4}^{31}$ & \tiny{BP + Chain} & 44 & 7998 & $\ZZ^{182}$ \\  \hline

(33,22,6,5,1) & $z_{0}^{2}+z_{1}^{3}+z_{2}^{11}+z_{2}z_{3}^{12}+z_{3}z_{4}^{61}$ & \tiny{BP + Chain} & 66 & 15860 & $\ZZ^{240}$ \\  \hline
\end{longtable}
\end{center}
This list is not necessary exhaustive for Cheltsov's list, since neither numbers 52, 81 nor 86 in \cite{IF} admit descriptions in terms of the  type of singularities where Orlik's conjecture is known to be  valid. However, our computer program suggests that these members do have torsion.

Also, it is interesting to notice that, the $\mathbb{Q}$-Fano 3-folds considered by Cheltsov are $d$-fold branch covers for certain weighted projective spaces branched over orbifold $K3$ surfaces of degree  $d$.   
It follows from a well-known result on links of branched covers (see Proposition 2.1 in \cite{DK}), that the   91 Sasaki-Einstein links associated to the 3-folds of Cheltsov's list can be realized as $d$-fold branched  covers  of $S^7$ branched along the submanifolds $\# k(S^2\times S^3).$ Actually, we extend Theorem 4.6 in \cite{Bo}. 

\begin{theorem}
There exist Sasaki-Einstein metrics on the 7-manifolds $M^7$ which can be realized as $d$-fold branched covers of $S^{7}$ branched along the submanifolds $ \# k(S^{2}\times S^{3})$ where  $k$ ranges from 3 to 21, except $k=17.$ The $\mathbb Q$-Fano 3-folds, the homology of $M^7$ and $k$ are given in Table I in Appendix, $(b)$. 
\end{theorem}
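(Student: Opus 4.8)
The plan is to realize each relevant member of Reid's list as a Brieskorn-type suspension and to read off both the metric and the branched-cover structure from ingredients already assembled above. Concretely, for a $\mathbb{Q}$-Fano $3$-fold $X_d\subset\mathbb{CP}(1,w_1,w_2,w_3,w_4)$ with $d=\sum_{i=1}^4 w_i$, the degree-$d$ defining polynomial can be normalized as $F=z_0^{d}+g(z_1,z_2,z_3,z_4)$, where $g$ is weighted homogeneous of degree $d$ in the weights $(w_1,w_2,w_3,w_4)$; since $\sum_{i=1}^4 w_i=d$, the hypersurface $\{g=0\}$ is exactly the orbifold K3 surface $Y_d\subset\mathbb{CP}(w_1,w_2,w_3,w_4)$ attached to that entry. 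First I would invoke Cheltsov's theorem together with Theorem 2.1 to conclude that the link $M^7=L_F$ carries a Sasaki--Einstein metric for each Kähler--Einstein member. Then, applying the branched-cover statement (Proposition 2.1 in \cite{DK}) to the suspension $F=z_0^d+g$, I would exhibit $M^7$ as the $d$-fold cyclic branched cover of $S^7$ branched over the $5$-dimensional link $L_g$ of $Y_d$.

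The second step identifies the branch locus topologically. By the Milnor fibration theorem, $L_g$ is a closed simply connected $5$-manifold (here $g$ has four variables, so $n=3$ and the link is $(n-2)=1$-connected) bounding the parallelizable Milnor fibre, hence spin. By the Smale--Barden classification, a closed simply connected spin $5$-manifold with torsion-free homology is diffeomorphic to $\#k(S^2\times S^3)$ with $k=b_2(L_g)$. To apply this I would certify that $H_2(L_g,\mathbb{Z})$ is torsion-free for the members at hand, computing the torsion through Orlik's algorithm, which is legitimate because $g$ is of chain, cycle, or Thom--Sebastiani type, where Orlik's conjecture holds by Hertling--Mase \cite{HM}. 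The integer $k$ is then read off from the Milnor--Orlik Betti-number formula (6) applied to $g$ (with $n=3$):
\begin{equation*}
k=b_2(L_g)=\sum (-1)^{4-s}\frac{u_{i_1}\cdots u_{i_s}}{v_{i_1}\cdots v_{i_s}\,\operatorname{lcm}(u_{i_1},\ldots,u_{i_s})},
\end{equation*}
the sum running over the subsets $\{i_1,\ldots,i_s\}\subset\{0,1,2,3\}$, with $u_i,v_i$ formed from $d$ and $(w_1,w_2,w_3,w_4)$ as in (3).

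The final step is the enumeration. I would run this computation over all admissible Kähler--Einstein members of Reid's list, record for each the triple $\bigl(X_d,\#k(S^2\times S^3),H_3(M^7,\mathbb{Z})\bigr)$, and collect the output in Table I, $(b)$ of the Appendix. It then remains to read off that the realized values are exactly $\{3,4,\ldots,21\}\setminus\{17\}$, which strictly enlarges the range furnished by Boyer's Theorem 4.6 in \cite{Bo}. The main obstacle I expect is not a single conceptual point but control of the branch locus across the whole list: one must verify, entry by entry, that the K3 link $L_g$ is genuinely torsion-free (so that it is $\#k(S^2\times S^3)$ rather than a more complicated Smale--Barden $5$-manifold), and then confirm that the resulting multiset of $k$-values fills the stated interval and truly omits $k=17$. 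This is precisely where the validity of Orlik's conjecture for invertible polynomials is indispensable, since it is what makes the torsion computation for each $L_g$ rigorous.
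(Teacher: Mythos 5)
Your proposal is correct and follows essentially the same route as the paper: Cheltsov plus Theorem 2.1 for the Sasaki--Einstein metric, Proposition 2.1 of \cite{DK} for the $d$-fold branched cover structure of the suspension $z_0^d+g$ over the K3 link, and a table-driven enumeration of $k$. The only difference is that where you identify the branch locus as $\#k(S^2\times S^3)$ directly via Smale's classification of simply connected spin $5$-manifolds together with an Orlik-algorithm torsion check on $L_g$, the paper simply cites Theorem A of \cite{Cu} (and Appendix B of \cite{BBG}) for that identification; your unpacking is a legitimate substitute and makes explicit why admissibility of $g$ for Orlik's conjecture is needed.
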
  
 \begin{proof}  This  is just a consequence of Theorem A in \cite{Cu} (also see Appendix B in \cite{BBG}),  and the fact that for each $k\not=17$ ranging from 3 to 21 there is an element in  Cheltsov's list that can be given as an hypersurface  singularity that is  of one of the types admissible for Orlik's conjecture.
\end{proof}

In Appendix (c) we also  include Tables II and III which 
list the third homology group of links of hypersurface singularities which are neither rational homology 7-spheres nor homeomorphic to connected sums of $S^3\times S^4$. Table II  consists of 79 links coming from Cheltsov's list and Table III consists of 1503 links  extracted from the Johnson and Koll\'ar list.

\section{appendix}
\begin{itemize}
\item[{\bf (a)}] The Johnson-Kollár list of hypersurfaces in weighted projective 4-space 
admitting Kähler-Einstein orbifold metrics is available at
\href{https://web.math.princeton.edu/~jmjohnso/delpezzo/KEandTiger.txt}{https://web.math.princeton.edu/~jmjohnso/delpezzo/ \newline KEandTiger.txt}. This list includes the weight vectors followed by data on whether or not it is known if the orbifold is Kähler-Einstein.

\item[{\bf (b)}] Table I, cited  in Theorem 3.2 is given here \href{https://github.com/Jcuadrosvalle/TABLES}{https://github.com/Jcuadrosvalle/TABLES}

\item[{\bf (c)}] Table II and Table III, where we exhibit the third homology group of links of hypersurface singularities that are neither rational homology 7-spheres nor homeomorphic to connected sums of $S^3\times S^4,$ are available at \href{https://github.com/Jcuadrosvalle/TABLES}{https://github.com/Jcuadrosvalle/TABLES}

\item[{\bf (d)}] Four codes in Matlab are available at 
\href{https://github.com/Jcuadrosvalle/Codes-in-Matlab}{https://github.com/Jcuadrosvalle/Codes-in-Matlab}. Codes1, 2 and 3  determine whether or not  the  singularities are of chain type, cycle type or an   iterated Thom-Sebastiani sum of chain type  and cycle type singularities, Code4  computes the third homology groups of the corresponding links.

\end{itemize}

\end{document}